\numberwithin{equation}{section}
\newtheorem{thm}[equation]{Theorem}
\newtheorem{proposition}[equation]{Proposition}
\newtheorem{prop}[equation]{Proposition}
\newtheorem{lemma}[equation]{Lemma}
\newtheorem{corollary}[equation]{Corollary}
\theoremstyle{remark}
\newtheorem{remark}[equation]{Remark}
\theoremstyle{definition}
\newtheorem{definition}[equation]{Definition}
\def\XXint#1#2#3{{\setbox0=\hbox{$#1{#2#3}{\int}$}
	\vcenter{\hbox{$#2#3$}}\kern-.5\wd0}}
\newcommand{\ra}{\rightarrow}
\newcommand{\N}{\mathbb N}
\newcommand{\R}{\mathbb R}
\newcommand{\Z}{\mathbb Z}
\newcommand{\h}{\mathfrak{h}}
\newcommand{\Aut}{\operatorname{Aut}}
\newcommand{\id}{\operatorname{id}}
\newcommand{\Lie}{\operatorname{Lie}}
\newcommand{\Isome}{\operatorname{Isome}}
\newcommand{\Stab}{\operatorname{Stab}}
\newcommand{\nil}{\operatorname{nil}} 
\newcommand{\acts}{\curvearrowright}
\def\ra{\rightarrow}
\newcommand{\normal}{\vartriangleleft}
\newcommand{\dif}{\mathrm{d}}
\newcommand{\braket}[1]{\langle  #1 \rangle}
\newcommand{\bbraket}[1]{\langle\langle  #1 \rangle\rangle}
\newcommand{\clo}{\bar} 
\begin{document}

\title{Isometries of nilpotent metric groups}

\author[Kivioja]{Ville Kivioja}
\email{kivioja.ville@gmail.com}

\author[Le Donne]{Enrico Le Donne}
\email{enrico.ledonne@gmail.com}
\address[Le Donne and Kivioja]{
Department of Mathematics and Statistics, University of Jyv\"askyl\"a, 40014 Jyv\"askyl\"a, Finland}

 \keywords{Isometries, nilpotent groups, affine transformations, nilradical}

\renewcommand{\subjclassname}{%
 \textup{2010} Mathematics Subject Classification}
\subjclass[]{ 
 22E25, 
53C30,  
 22F30. 
}

\date{\today}

\begin{abstract}
We consider Lie groups equipped with arbitrary distances. We only assume that the distance is left-invariant and induces the manifold topology. For brevity, we call such object metric Lie groups.
Apart from Riemannian Lie groups, distinguished examples are sub-Riemannian Lie groups and, in particular, Carnot groups equipped with Carnot-Carath\'eodory distances.
We study the regularity of isometries, i.e., distance-preserving homeomorphisms.
Our first result is the analyticity of such maps between metric Lie groups.
The second result is that if two metric Lie groups are connected and nilpotent then every isometry between the groups is the composition of a left translation and an isomorphism.
There are counterexamples if one does not assume the groups to be either connected or nilpotent. 
The first result is based on a solution of the Hilbert 5th problem by Montgomery and Zippin.
The second result is proved, via the first result, considering the Riemannian case, which for self-isometries was solved by Wolf.
\end{abstract}

\maketitle
\tableofcontents

\section{Introduction}   			   


In this paper, with the term \emph{metric Lie group} we mean a Lie group
equipped with a left-invariant distance that induces the manifold topology.
An {\em isometry} is a distance-preserving bijection. Hence, a priori it is only a homeomorphism. 
As a general fact we  show 
the following regularity result.
\begin{thm}\label{smooth:thm}
Isometries between  metric Lie groups are analytic maps.
\end{thm}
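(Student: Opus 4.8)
The plan is to view every isometry as a composition of analytic maps manufactured out of the isometry groups, which by Montgomery--Zippin's work on Hilbert's fifth problem will turn out to be Lie groups acting analytically. Since analyticity is a local property and, by composing with left translations, we may always move a given point and its image into the identity components, it suffices to treat the case where both groups are connected; so fix a connected metric Lie group $(G,d)$ and let $\Isom(G,d)$ carry the compact--open topology, which for the locally compact space $G$ coincides with uniform convergence on compact sets. First I would record that $\Isom(G,d)$ is a locally compact topological group acting continuously and properly on $G$: because $d$ induces the manifold topology, $G$ is a connected, locally compact, locally connected metric space, so this is exactly the classical theorem of van Dantzig--van der Waerden. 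Note that this does not require $(G,d)$ to be a proper metric space, which in general it is not (e.g.\ a bounded left-invariant distance on $\R$).

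Next, observe that $\Isom(G,d)$ acts effectively (it is a group of bijections) and transitively (it contains all left translations $L_g$, $g\in G$), with compact point stabilizers by properness. By the structure theory of locally compact transformation groups of manifolds (Montgomery--Zippin, using Gleason's theorem), $\Isom(G,d)$ is then a Lie group, and, the action being transitive with stabilizer $K$ of the identity, $G\cong\Isom(G,d)/K$ inherits a real-analytic structure for which the action map $\Isom(G,d)\times G\to G$ is analytic. One then checks that this analytic structure agrees with the given Lie group structure on $G$: the inclusion $g\mapsto L_g$ is a continuous homomorphism of Lie groups $G\hookrightarrow\Isom(G,d)$, hence analytic by automatic analyticity of continuous homomorphisms, and composing it with the orbit map $\phi\mapsto\phi(p)$ yields a right translation of $G$, which pins the structures down. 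In particular, for every $p\in G$ the orbit map $\phi\mapsto\phi(p)\colon\Isom(G,d)\to G$ is analytic and $g\mapsto L_g$ is an analytic embedding.

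Now let $f\colon(G_1,d_1)\to(G_2,d_2)$ be an isometry (with both groups connected after the reduction above). Conjugation $\phi\mapsto f\circ\phi\circ f^{-1}$ is a topological group isomorphism $\Phi\colon\Isom(G_1,d_1)\to\Isom(G_2,d_2)$, since $f$ and $f^{-1}$ are homeomorphisms and conjugation by a homeomorphism is continuous for the compact--open topologies; as both sides are Lie groups, $\Phi$ is analytic. Fix $e_1\in G_1$ and set $p:=f(e_1)\in G_2$. For $g\in G_1$,
\[
f(g)=f\bigl(L^{1}_{g}(e_1)\bigr)=\bigl(f\circ L^{1}_{g}\circ f^{-1}\bigr)(p)=\Phi\bigl(L^{1}_{g}\bigr)(p),
\]
so $f$ factors as
\[
G_1\xrightarrow{\;g\mapsto L^{1}_{g}\;}\Isom(G_1,d_1)\xrightarrow{\;\Phi\;}\Isom(G_2,d_2)\xrightarrow{\;\phi\mapsto\phi(p)\;}G_2,
\]
a composition of analytic maps by the previous paragraph; hence $f$ is analytic. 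Applying the same to $f^{-1}$ shows $f$ is an analytic diffeomorphism, which proves the theorem.

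I expect the main obstacle to be the first half: producing the locally compact group structure on $\Isom(G,d)$ without properness of the metric, and then correctly invoking the Montgomery--Zippin machinery (Gleason--Yamabe, no small subgroups, transitive locally compact transformation groups of manifolds) to upgrade it to a Lie group with an analytic action. The remaining bookkeeping --- matching the Montgomery--Zippin analytic structure on $G$ with the given Lie group structure, and the reduction from disconnected groups to identity components via locality of analyticity --- is routine but must be carried out with some care.
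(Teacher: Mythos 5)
Your proof is correct and its core is the same as the paper's: after reducing to connected groups via left translations, you upgrade $\Isom(G_i,d_i)$ to Lie groups with analytic actions via Montgomery--Zippin, note that conjugation by $f$ is a continuous hence analytic isomorphism of these Lie groups, and factor $f$ as (orbit map)\,$\circ$\,(conjugation)\,$\circ$\,(inclusion of left translations) --- exactly the paper's decomposition $\sigma\circ \mathrm{C}_F\circ\iota$. The one place where you genuinely diverge is the local compactness of the isometry group, which is the delicate analytic input since $(G,d)$ need not be boundedly compact. You invoke the van Dantzig--van der Waerden theorem for connected, locally compact metric spaces, which indeed does not require properness of the distance and settles the point in one stroke. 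The paper instead argues by hand: it constructs an auxiliary boundedly compact left-invariant distance $\rho$ (a chain metric built from $d$ on a small compact ball) with $\Isom(M,d)\subseteq\Isom(M,\rho)$, and then applies Ascoli--Arzel\`a to the stabilizer inside $\Isom(M,\rho)$. Your route is shorter and cites a classical theorem; the paper's route is self-contained and produces the auxiliary metric $\rho$, which it reuses elsewhere. You also explicitly address matching the Montgomery--Zippin analytic structure on $\Isom(G)/K$ with the given Lie group structure on $G$, a point the paper absorbs into the statement that the action is analytic; spelling it out as you do is a reasonable extra precaution and does not change the argument.
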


We say that a map 
between groups is \emph{affine} if it is the composition of a left translation and a group homomorphism.
The main result of this paper is the following stronger result for nilpotent groups.
\begin{thm}\label{main:thm} 
Isometries between nilpotent connected metric Lie groups
are affine.
\end{thm}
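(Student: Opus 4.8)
\emph{Proof plan.} Let $F\colon G\to H$ be an isometry between nilpotent connected metric Lie groups; by Theorem~\ref{smooth:thm} it is analytic. Composing $F$ with the left translation by $F(e_G)^{-1}$ in $H$ — an affine, hence harmless, modification — I may assume $F(e_G)=e_H$, and I rename $F$ as $f$; it then suffices to prove that $f$ is a group homomorphism. \emph{Step 1: reduce to the Riemannian case.} By the Montgomery--Zippin theory underlying Theorem~\ref{smooth:thm}, $\Isom(G,d_G)$ is a Lie group acting analytically, transitively and properly on $G$ with compact stabiliser $K_G=\Stab(e_G)$, and likewise for $H$. Averaging an inner product on $\Lie(G)$ over $K_G$ and spreading it by $\Isom(G,d_G)$ produces an $\Isom(G,d_G)$-invariant, in particular left-invariant, Riemannian metric $\rho_G$ on $G$, so $(G,\rho_G)$ is a connected nilpotent Riemannian Lie group. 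Since $\phi\mapsto f\phi f^{-1}$ is a topological, hence Lie, group isomorphism $\Isom(G,d_G)\to\Isom(H,d_H)$ carrying $K_G$ onto $\Stab(e_H)$, the metric $\rho'_H:=f_*\rho_G$ is $\Isom(H,d_H)$-invariant, in particular left-invariant, and $f\colon(G,\rho_G)\to(H,\rho'_H)$ is a Riemannian isometry fixing $e$. Thus it is enough to show: a Riemannian isometry fixing $e$ between connected nilpotent Riemannian Lie groups is a group isomorphism.

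\emph{Step 2: reduce to simply connected groups.} I pass to universal Riemannian covers $\pi_i\colon(\widetilde N_i,\widetilde g_i)\to(N_i,g_i)$; the $\widetilde N_i$ are simply connected nilpotent Lie groups, the $\widetilde g_i$ are left-invariant, and $f$ lifts to a Riemannian isometry $\widetilde f$ with $\widetilde f(\widetilde e_1)=\widetilde e_2$. Because $\widetilde f$ maps fibres of $\pi_1$ to fibres of $\pi_2$, it sends $\ker\pi_1$ onto $\ker\pi_2$, so once $\widetilde f$ is known to be a group isomorphism it descends to one $N_1\to N_2$ equal to $f$. Hence I may assume $N_1,N_2$ simply connected (renaming $\widetilde N_i,\widetilde g_i,\widetilde f$ accordingly).

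\emph{Step 3: the simply connected Riemannian case via Wolf.} By Wolf's theorem, for a simply connected nilpotent Riemannian Lie group $\Isom(N,g)=N\rtimes\Aut(N,g)$, where $N$ acts by left translations and $\Aut(N,g)$ is the compact group of isometric automorphisms; in particular the subgroup $L_N$ of left translations is normal in $\Isom(N,g)$, and, as $\Aut(N,g)$ embeds faithfully into $\Aut(\Lie N)$, no nontrivial element of it acts trivially on $N$. A short argument then yields $\nil(\Isom(N,g))=L_N$: a connected normal nilpotent subgroup of $N\rtimes\Aut(N,g)$ projects to a connected nilpotent subgroup of $\Aut(N,g)$, which acts semisimply on $\Lie N$ by compactness and unipotently by nilpotency, hence trivially, hence is trivial by faithfulness, so the subgroup lies in $L_N$. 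Now conjugation by $f$ identifies $\Isom(N_1,g_1)$ with $\Isom(N_2,g_2)$, and hence $L_{N_1}$ with $S:=fL_{N_1}f^{-1}$, which is connected, nilpotent, and normal in $\Isom(N_2,g_2)$ because $L_{N_1}\trianglelefteq\Isom(N_1,g_1)$; therefore $S\subseteq\nil(\Isom(N_2,g_2))=L_{N_2}$, and since $S$ acts transitively on $N_2$ we get $\dim S=\dim N_2$, so $S=L_{N_2}$. Thus $fL_af^{-1}=L_{\beta(a)}$ for a group isomorphism $\beta\colon N_1\to N_2$, and evaluating at $e_2=f(e_1)$ gives $\beta(a)=f(a)$; hence $f(ab)=f(a)f(b)$ for all $a,b$, i.e.\ $f$ is an isomorphism, which completes the reduction and the proof.

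\emph{Main obstacle.} The heavy non-elementary inputs — the analyticity of Theorem~\ref{smooth:thm}, the Lie-group structure of the isometry group (Montgomery--Zippin), and Wolf's structure theorem in the Riemannian case — are all quoted rather than reproved; granting them, the genuinely new content is Step~3's identification $\nil(\Isom(N,g))=L_N$, which is what pins down the left translations intrinsically inside the isometry group. The point that most needs care is that the averaged metric in Step~1 is invariant under the \emph{whole} isometry group (which may be noncompact and disconnected) — this works precisely because only $K_G$-invariance of the inner product at $e_G$ enters the construction — and, relatedly, that the covering-space reduction of Step~2 is compatible with the group structures.
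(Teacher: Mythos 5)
Your overall architecture is exactly the paper's: reduce to the Riemannian case by averaging an inner product over the compact stabilizer and spreading it by left translations (Proposition~\ref{prop:isometries_are_riemannian}), identify the left translations inside the Riemannian isometry group via Wolf's nilradical characterization (Theorem~\ref{thm_wilson}), and transport $N_1^L$ onto $N_2^L$ by conjugation with $F$ to conclude that $F$ is affine (this is Lemmas~\ref{Lemma316} and~\ref{lemma317}). The universal-cover detour in your Step~2 is not needed --- Wolf's theorem in the form the paper uses it holds for connected, not necessarily simply connected, nilpotent Riemannian Lie groups --- but it is harmless.

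The one step that does not hold up as written is your justification of $\nil(\Isom(N,g))=L_N$ in Step~3, which you yourself single out as the crux. You argue that the projection to $\Aut(N,g)$ of a connected normal nilpotent subgroup $S \le N\rtimes\Aut(N,g)$ acts ``unipotently by nilpotency.'' That inference is false: a connected nilpotent (even abelian) subgroup of a compact linear group need not act unipotently --- any torus inside $\Aut(N,g)$ is connected and abelian yet acts semisimply and nontrivially on $\Lie(N)$, and $\Aut(N,g)$ typically contains such tori (e.g.\ $O(n)$ for Euclidean $\mathbb{R}^n$). Normality of $S$ in the \emph{whole} isometry group must enter more substantially. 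The standard repair (essentially Wilson's argument) is: replace $S$ by $S\cdot L_N$, which is still normal and still nilpotent because the sum of two nilpotent ideals of a Lie algebra is a nilpotent ideal; for $X=(Y,D)$ in its Lie algebra, $\operatorname{ad}_X$ is nilpotent on $\mathfrak n:=\Lie(N)$ and induces exactly the action of $D$ on $\mathfrak n/[\mathfrak n,\mathfrak n]$, so $D$ is nilpotent there, while compactness of $\Aut(N,g)$ makes $D$ semisimple; hence $D$ vanishes on $\mathfrak n/[\mathfrak n,\mathfrak n]$, which forces $D$ to be a nilpotent derivation of $\mathfrak n$ and therefore $D=0$. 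Alternatively, simply quote Wolf's theorem directly in the nilradical form \eqref{nilradica_condition}, which is precisely what the paper does; with either substitution your argument closes and coincides with the paper's proof.
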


In particular our result shows that 
\begin{itemize}
\item[{[\ref{main:thm}.i]}] two isometric nilpotent connected metric Lie groups are isomorphic.
\item[{[\ref{main:thm}.ii]}] the isometry group of a nilpotent connected metric Lie group is a semi-direct product, see \eqref{semidirect}.
\end{itemize}

Theorem \ref{main:thm} is a generalization of previous results. Indeed, on the one hand, 
Wolf (\cite{Wolf}, see also \cite{Wilson}) extended the classical fact that Euclidean isometries are affine to self-isometries of nilpotent Lie groups equipped with left-invariant \emph{Riemannian} distances.
On the other hand, Theorem \ref{main:thm} has been previously shown in the case of sub-Riemannian (and more generally sub-Finsler) Carnot groups, see  \cite{pansu,hamenstadt,
kishimoto,LeDonne-Ottazzi}.
In fact our strategy of proof is built on both \cite{Wolf,Wilson} and \cite{LeDonne-Ottazzi}.

We remark that both assumptions `connectedness' and `nilpotency'  are necessary for Theorem \ref{main:thm} to hold.
 In this respect in Section \ref{sec:counterexamples} we provide some counterexamples.

The large-scale analogue of Theorem \ref{main:thm} is a challenging open problem that raised a lot attention since the papers of Pansu and Shalom \cite{pansu,Shalom}. What is expected is that if two finitely generated nilpotent groups are torsion-free, then every quasi-isometry between them induces an isomorphism between their Malcev completions. Recent progress has been done in \cite{Kyed_Densing:Petersen}.

We spend the rest of the introduction to explain the strategy of the proofs of the two theorems and the structure of the paper.
To study isometries between two metric Lie groups, we first treat  the case when the two groups are the same, i.e., they are isometric via a  Lie group isomorphism. 
If \( M \) is a connected metric Lie group, we consider its isometry group \( G \), that is, the set of self-isometries of \( M \) equipped with the composition rule and the compact-open topology. 
Hence, the group \( G \)   acts continuously, transitively and by isometries on \( M \). 
It is crucial that \( G \) is a locally compact   group. 
This latter fact follows from 
Ascoli--Arzel\'a Theorem but it
needs some argument since closed balls are not necessarily assumed to be compact.
At this point, the theory of locally compact groups, \cite{mz}, provides  a Lie group structure  on \( G \) such that the action \( G \acts M \) is smooth.

Assume that $M_1, M_2$ are metric Lie groups and $F:M_1\to M_2$ is an isometry.
We consider the above-mentioned Lie group structures
on the respective isometry groups
$G_1$, $G_2$.
The conjugation by $F$ provides a map from 
$G_1$ to $G_2$ that is a continuous homomorphism between Lie groups, hence it is analytic.
This observation will give the conclusion of the proof of 
Theorem~\ref{smooth:thm}, see Section ~\ref{sec:smooth}.

An important consequence of 
Theorem~\ref{smooth:thm} is that every  isometry between metric Lie groups can be seen as a Riemannian isometry.
Namely, for every map 
$F:M_1\to M_2$ as above there are 
Riemannian left-invariant structures $g_1, g_2$ such that
$F:(M_1,g_1)\to (M_2,g_2)$
is a Riemannian isometry, see Proposition~\ref{prop:isometries_are_riemannian}.
Of a separate interest is the fact that the Riemannian structures can be chosen independently on $F$.



To prove Theorem \ref{main:thm}  we will use the above relationship with the Riemannian case together with 
  Wolf's study of    the nilradical of   isometry groups of nilpotent Riemannian Lie groups.
We recall that the nilradical of a Lie algebra \( \mathfrak{g} \), which we denote by \( \nil(\mathfrak{g}) \), is the biggest
nilpotent ideal of \( \mathfrak{g} \), see 
Section~\ref{sec:nilradical}.
By a theorem of 
Wolf (see Theorem \ref{thm_wilson}) if \( N \) is a 
nilpotent  Riemannian  Lie group, then 
\begin{equation}\label{nilradica_condition:intro} 
 \Lie(N) = \nil(\Lie(\Isome(N))) .
\end{equation} 
Here we are considering the natural inclusion \( N \hookrightarrow \Isome(N) \) via left translations and we are using  that $\Isome(N)$ is a Lie group.
One should think of \eqref{nilradica_condition:intro} as an
  algebraic characterization of a 
nilpotent Riemannian  Lie group  inside its isometry group.
We deduce that 
given an isometry 
\( F \colon N_1 \ra N_2 \) between such groups, the conjugation by $F$, which is an isomorphism between the isometry groups, induces an isomorphism between $N_1 $ and $ N_2$,  see 
Lemma \ref{lemma317}.
In fact, we show that 
if $F$ fixes the identity, then the above isomorphism coincides with
$F$, see 
Lemma \ref{Lemma316}. In general, we conclude that $F$ is an affine map.


For completeness, we  show that in the general context of 
 nilpotent connected metric Lie groups   the nilradical condition   \eqref{nilradica_condition:intro} still holds,
see Proposition \ref{star!}.
We also show that if
$M$ is a group equipped with a left-invariant distance, then
its
 isometries   are affine if and only if
its isometry group $G$ splits as semi-direct product
\begin{equation*}  
G = M \rtimes \mathrm{Stab}_1(G),
\end{equation*}
where \( \mathrm{Stab}_1(G) \) is the set of isometries fixing the identity element $1$ of $M$.
We provide the simple proof in Lemma \ref{TFAE}.

\subsubsection{Acknowledgement}
The authors thank
M.~Jablonski,
A.~Ottazzi, and
P.~Petersen for helpful remarks.
E.L.D. acknowledges the support of the Academy of Finland project no. 288501.


\section{Regularity of isometries}
\subsection{Lie group structure of isometry groups}

The first aim of this section is to show that the isometry group of a metric Lie group is a Lie group. Such a fact is a consequence of the solution of the Hilbert 5th problem by Montgomery--Zippin, together with the observation that the isometry group is locally compact. This latter property follows by Ascoli--Arzel\`a theorem.

We stress that a
metric Lie group $(M,d)$ 
may not be boundedly compact. Namely, the closed balls 
$\bar B_d(1_M, r):=
\{ p\in M : d(p,1_M)\leq r\}
$ with respect to $d$ may not be compact. For example, this is the case for the distance $\min\{d_E, 1\}$ on $\R$, where $d_E$ denotes the Euclidean distance.

\begin{remark} \label{rmk:boundedy_cpt}
If \( (M,d) \) is a connected metric Lie group, then there exists a distance \( \rho \) such that \( (M,\rho) \) is a metric Lie group that is boundedly compact and \( \Isome(M,d) \subseteq \Isome(M,\rho) \).
Indeed, since the topology induced by $d$ is the manifold topology, then there exists some $r_0>0$ such that 
$\bar B_d(1_M, r_0)$ is compact.
Then we can consider the distance
$$\rho(p,q):=
\inf
\{ 
\sum_{i=1}^k d(p_{i-1}, p_i) \,:\, k\in \N,\, p_i\in M,\, p_0=p, \,p_k=q,\, d(p_{i-1}, p_i)\leq r_0
 \}.$$
 Once can 
 check that $(M,\rho)$ is a metric Lie group, 
 for all $r>0$ the set
 $\bar B_\rho(1_M, r)$  is compact, and
 $\Isome(M,d) \subseteq \Isome(M,\rho).$
\end{remark}

Let us clarify now why the isometry group of a connected metric Lie group is locally compact, which was not justified in \cite{LeDonne-Ottazzi}. With the terminology of Remark \ref{rmk:boundedy_cpt} the stabilizer \( S \) of \( 1 \) in \( \Isome(M,d) \) is a closed subgroup of the stabilizer \( S_\rho \) of \( 1 \) in \( \Isome(M,\rho) \). Furthermore, for any \( r > 0 \) and \( f \in S_\rho \) we have that \( f(\clo{B}_\rho(1,r)) = \clo{B}_\rho(1,r) \), which is compact. Hence, the maps from \( S \) restricted to \( \clo{B}(1,r) \) form an equi-uniformly continuous and pointwise precompact family. Ascoli--Arzel\`a Theorem implies that \( S_\rho \) is compact, being also closed in \( C^0(M,M) \). Consequently, \( S \) is compact and because \( M \) is locally compact, then also \( \Isome(G,d) \) is locally compact. At this point we are allowed to use the theory of locally compact groups after Gleason--Montgomery--Zippin \cite{mz}. 
In fact, the argument in \cite[Proposition~4.5]{LeDonne-Ottazzi} concludes the proof of the following result.
\begin{proposition}\label{Lie_structure}
Let  \(M \) be  a metric Lie group with isometry group $G$. Assume that $M$ is connected.
\begin{enumerate}
\item The stabilizers of the action \( G \acts M \) are compact.
\item The topological group $G$ is a Lie group (finite dimensional and with finitely many connected components) acting analytically on $M$.
\end{enumerate}
\end{proposition}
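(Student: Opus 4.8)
The plan is to follow the blueprint already laid out in the exposition: reduce to the boundedly compact case, establish local compactness of the isometry group via Ascoli--Arzel\`a, and then invoke the solution of Hilbert's fifth problem. Concretely, let $M$ be a connected metric Lie group with distance $d$, and let $G = \Isome(M,d)$ with the compact-open topology. Using Remark~\ref{rmk:boundedy_cpt}, fix an auxiliary distance $\rho$ so that $(M,\rho)$ is boundedly compact, $\Isome(M,d) \subseteq \Isome(M,\rho)$, and the two distances induce the same topology. Write $S$ for the stabilizer of $1_M$ in $G$ and $S_\rho$ for the stabilizer of $1_M$ in $\Isome(M,\rho)$, so that $S = G \cap S_\rho$ is a subgroup of $S_\rho$.

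For part (1), the first step is to show $S_\rho$ is compact. Each $f \in S_\rho$ is a $\rho$-isometry fixing $1_M$, hence preserves every closed ball $\bar B_\rho(1_M,r)$, which is compact by bounded compactness. So $S_\rho$ restricted to each such ball is a family of maps into a fixed compact set, and it is equi-uniformly continuous (being isometries); Ascoli--Arzel\`a then gives precompactness of $S_\rho$ in $C^0(M,M)$ for the topology of uniform convergence on compact sets. Since being a surjective isometry fixing $1_M$ is a closed condition under this convergence, $S_\rho$ is closed, hence compact. Then $S$, being closed in the compact $S_\rho$ (the condition of preserving $d$ is closed), is compact. The same argument applied to the conjugate distance $d(g^{-1}\cdot,g^{-1}\cdot)$, or equivalently translating, shows every stabilizer $\Stab_p(G)$ is compact. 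Local compactness of $G$ then follows: a neighborhood of $\id$ in $G$ maps, via the orbit map $g \mapsto g(1_M)$, to a neighborhood of $1_M$ in $M$ with compact stabilizer fibers, and $M$ is locally compact, so $G$ is locally compact (one uses here that the orbit map is open onto $M$, which holds because the action is transitive and $G$ is, say, second countable, or one argues directly that a product of a small compact ball in $M$ with the compact $S$ maps onto a neighborhood of $\id$).

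For part (2): $G$ is now a locally compact, second-countable topological group acting continuously and transitively on the connected manifold $M$ with compact point stabilizers, and $G$ acts effectively. By the Montgomery--Zippin theory \cite{mz} (the solution of the Hilbert 5th problem, in the form that a locally compact group acting effectively and transitively on a connected manifold is a Lie group, and the action is then automatically smooth/analytic for an appropriate structure), $G$ is a Lie group acting analytically on $M$. Finite dimensionality follows since $M$ is finite dimensional and stabilizers are compact (hence finite dimensional); finitely many connected components follows because $G/G^\circ$ acts on $M$ and, combined with $M$ being connected and stabilizers compact, $G$ has finitely many components (alternatively cite \cite[Proposition~4.5]{LeDonne-Ottazzi} directly, where this is carried out).

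The main obstacle I expect is not any single deep step — Montgomery--Zippin does the heavy lifting — but rather the care needed in the Ascoli--Arzel\`a argument given that $\bar B_d$ need not be compact: one must genuinely pass to $\rho$, check that $\Isome(M,d)$ is a \emph{closed} subset of $\Isome(M,\rho)$, and verify that pointwise precompactness plus equicontinuity really yields precompactness in the compact-open topology on all of $M$ (using $\sigma$-compactness of $M$ to reduce uniform-on-compacta convergence to a diagonal argument over an exhaustion by balls). Once local compactness and compactness of stabilizers are in hand, invoking \cite{mz} and the cited argument of \cite[Proposition~4.5]{LeDonne-Ottazzi} finishes the proof.
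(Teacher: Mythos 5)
Your proposal is correct and follows essentially the same route as the paper: pass to the boundedly compact auxiliary distance $\rho$ of Remark~\ref{rmk:boundedy_cpt}, use Ascoli--Arzel\`a to get compactness of $S_\rho$ and hence of the closed subgroup $S$, deduce local compactness of $G$, and then invoke Montgomery--Zippin via the argument of \cite[Proposition~4.5]{LeDonne-Ottazzi}. The extra care you take with closedness of $\Isome(M,d)$ in $\Isome(M,\rho)$ and with the orbit map is exactly the kind of detail the paper leaves implicit.
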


\begin{remark}
The assumption of $M$ being connected in Proposition \ref{Lie_structure} is necessary. Indeed, one can take as a counterexample the group $\Z$ with the discrete distance.
\end{remark}

\subsection{Proof of smoothness} \label{sec:smooth}
With the use of
Proposition \ref{Lie_structure}, we  give the proof of the analyticity of isometries (Theorem \ref{smooth:thm}).
We remark that in the Riemannian setting the classical result of 
Myers and Steenrod gives smoothness of isometries, see
\cite{Myers-Steenrod}, and more generally
\cite{Capogna_LeDonne}.
However,  the following proof is different in spirit and, nonetheless, it will imply (see Proposition \ref{prop:isometries_are_riemannian}) that such metric isometries are Riemannian isometries for some Riemannian structures. 

\begin{proof}[Proof of Theorem \ref{smooth:thm}] 
Let \( F \colon M_1 \to M_2 \) be an isometry between metric Lie groups.
Without loss of generality we may assume that \( p=1_{M_1} \), \( F(1_{M_1})=1_{M_2} \) and that both \( M_1 \) and \( M_2 \) are connected, since left translations are analytic isometries and connected components of identity elements are open. By Proposition \ref{Lie_structure}, for \( i \in \{ 1,2 \} \), the space \( G_i \coloneqq \Isome(M_i) \) is a Lie group. The map \( \mathrm{C}_F \colon G_1 \to G_2 \) defined as \( I \mapsto F \circ I \circ F^{-1} \) is a group isomorphism that is continuous, see \cite[Theorem 4]{Arens}. Hence, the map \( \mathrm{C}_F \) is analytic, see \cite[p.~117 Theorem~2.6]{Helgason}.

Consider also the inclusion \( \iota \colon M_1 \to G_1 \), \( m \mapsto L_m \), which is analytic being a continuous homomorphism, and the orbit map \( \sigma \colon G_2 \to M_2 \), \( I \mapsto I(1_{M_2}) \), which is analytic since the action is analytic (Proposition \ref{Lie_structure}).
We deduce that \( \sigma \circ \mathrm{C}_F \circ \iota \) is analytic.
We claim that this map is \( F \). Indeed,
for any \( m \in M_1 \) it holds
\[
(\sigma \circ \mathrm{C}_F \circ \iota)(m) 
=\sigma(F \circ L_m \circ F^{-1})
=(F \circ L_m \circ F^{-1})(1_{M_2})
=F(m).
\qedhere
\]
\end{proof} 

\subsection{Isometries as Riemannian isometries}

We show next that isometries between metric Lie groups are actually Riemannian isometries for some left-invariant structures.
Let us point out that when $M$ is a Lie group and $g$ is a  left-invariant Riemannian metric tensor on $g$, then one has an induced Riemannian distance $d_g$ and, by the theorem of Myers and Steenrod  
\cite{Myers-Steenrod},   the group $\Isome(M,d_g) $  of  distance-preserving bijections
 coincides with the group $\Isome(M,g)$ of   tensor-preserving diffeomorphisms.
 In what follows we shall write $(M,g)$ to denote the metric Lie group $(M,d_g) $.

\begin{proposition} \label{prop:isometries_are_riemannian}
If \( (M_1,d_1) \) and \( (M_2,d_2) \) are connected metric Lie groups, then
there exists left-invariant Riemannian metrics \( g_1 \) and \( g_2 \) on \( M_1 \) and \( M_2 \), respectively, such that 
\( \Isome(M_i,d_i) \subseteq \Isome(M_i,g_i) \) for \( i \in \{1,2\} \) and for all isometries
\( F \colon (M_1,d_1) \ra (M_2,d_2) \) the map 
\( F \colon (M_1,g_1) \ra (M_2,g_2) \) is a Riemannian isometry.
\end{proposition}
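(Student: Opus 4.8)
The plan is to build the Riemannian metrics $g_i$ by averaging an arbitrary left-invariant metric over the compact stabilizer, exploiting the Lie group structure on the isometry groups furnished by Proposition~\ref{Lie_structure}, and then to identify the relevant isometry groups using the conjugation isomorphism $\mathrm{C}_F$ from the proof of Theorem~\ref{smooth:thm}. First I would fix, for $i\in\{1,2\}$, the Lie group $G_i:=\Isome(M_i,d_i)$, which acts analytically and transitively on $M_i$ with compact stabilizer $S_i:=\Stab_{1_{M_i}}(G_i)$. On the tangent space $T_{1_{M_i}}M_i$ the stabilizer $S_i$ acts linearly via differentials, so choosing any inner product and averaging it over $S_i$ with respect to Haar measure gives an $S_i$-invariant inner product on $T_{1_{M_i}}M_i$; extending it by left translations on $M_i$ produces a left-invariant Riemannian metric $g_i$. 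By construction every element of $S_i$ is a $g_i$-isometry, and since left translations are $g_i$-isometries and $G_i$ is generated by $S_i$ together with the left translations (transitivity), we get $G_i=\Isome(M_i,d_i)\subseteq\Isome(M_i,g_i)$; by Myers--Steenrod $\Isome(M_i,g_i)=\Isome(M_i,d_i)$ is a genuine equality, though only the inclusion is needed.

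Next I would establish the conjugation statement. Given any isometry $F\colon(M_1,d_1)\to(M_2,d_2)$, by composing with the left translation $L_{F(1_{M_1})^{-1}}$ — which is a $d_2$- and $g_2$-isometry — we may assume $F(1_{M_1})=1_{M_2}$, so it suffices to treat that case. As in the proof of Theorem~\ref{smooth:thm}, $\mathrm{C}_F\colon G_1\to G_2$, $I\mapsto F\circ I\circ F^{-1}$, is an isomorphism of Lie groups, and it carries $S_1$ onto $S_2$ since $F$ fixes the identity. The key computation is that the differential at $1_{M_1}$ of the action intertwines things correctly: for $I\in S_1$ one has $F\circ I=\mathrm{C}_F(I)\circ F$, hence $d F_{1_{M_1}}\circ dI_{1_{M_1}}=d(\mathrm{C}_F(I))_{1_{M_2}}\circ dF_{1_{M_1}}$, so $dF_{1_{M_1}}$ conjugates the linear $S_1$-action on $T_{1_{M_1}}M_1$ to the linear $S_2$-action on $T_{1_{M_2}}M_2$.

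The last step is to check that $dF_{1_{M_1}}$ is a linear isometry from $(T_{1_{M_1}}M_1,g_1|_{1_{M_1}})$ to $(T_{1_{M_2}}M_2,g_2|_{1_{M_2}})$, which combined with the fact that $F$ conjugates the $G_1$-action to the $G_2$-action and that $g_i$ is left-invariant will give that $F$ is a Riemannian isometry globally. Here I would \emph{not} claim $dF_{1_{M_1}}$ is automatically an isometry — rather, I would push forward $g_1|_{1_{M_1}}$ by $dF_{1_{M_1}}$ to get an inner product on $T_{1_{M_2}}M_2$ that is $S_2$-invariant (by the intertwining above), and then observe that one should \emph{define} $g_2$ from $g_1$ via $F$ in the first place, or more cleanly: choose the inner product on $T_{1_{M_1}}M_1$ freely, average over $S_1$ to get $g_1$, and then \emph{define} the inner product on $T_{1_{M_2}}M_2$ to be the $dF_{1_{M_1}}$-pushforward of $g_1|_{1_{M_1}}$ — this is already $S_2$-invariant, so averaging it over $S_2$ leaves it unchanged, yielding $g_2$ with $dF_{1_{M_1}}$ a linear isometry by construction. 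The point that makes the $g_i$ independent of $F$ is that any two isometries differ (after normalizing at the identity) by an element of $S_1$ on the source and $S_2$ on the target, under which $g_1,g_2$ are invariant; so the same pair $(g_1,g_2)$ works for all $F$ simultaneously.

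The main obstacle is the logical ordering: one must arrange that a \emph{single} pair $(g_1,g_2)$ works for \emph{all} isometries $F$ at once, not one pair per $F$. The resolution, as sketched, is that normalizing $F$ to fix the identity reduces all isometries to ones differing by stabilizer elements, and $S_i$-invariance of $g_i$ absorbs exactly this ambiguity; concretely, fixing one isometry $F_0$ (if any exists) to transport $g_1$ to $g_2$, every other normalized isometry $F$ satisfies $F_0^{-1}\circ F\in S_1$ composed with a suitable element, so $dF_{1_{M_1}}$ and $d(F_0)_{1_{M_1}}$ differ by an $S_i$-isometry on each side and hence $F$ is again a $g$-isometry. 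If no isometry $F_0$ exists the statement is vacuous for cross-isometries and one just needs $\Isome(M_i,d_i)\subseteq\Isome(M_i,g_i)$, which the averaging argument gives directly. The routine verifications I would leave to the reader are: that the averaged tensor is positive-definite and smooth, that left-invariant extension is well-defined, and the standard fact that a diffeomorphism whose differential at one point is a linear isometry and which conjugates a transitive isometric action on one side to one on the other is a global Riemannian isometry.
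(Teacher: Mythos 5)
Your proposal is correct and follows essentially the same route as the paper: average an inner product over the compact stabilizer (Proposition \ref{Lie_structure}) to obtain a left-invariant Riemannian metric on one factor, transport it to the other factor through one fixed isometry, and absorb the difference between any two isometries into the stabilizer, whose invariance the averaging already guarantees. The only cosmetic difference is that the paper averages on $M_2$ and pulls back the tensor globally as $g_1=F^*g_2$, whereas you average on $M_1$ and push forward the inner product at the identity before extending left-invariantly; both yield the same conclusion.
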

Let us first deal with the case \( (M_1,d_1) = (M_2,d_2) \).
\begin{lemma}\label{lemma:riem_metric}
If \( (M,d) \) is  a connected  metric Lie group, then there is a Riemannian metric \( g \) such that 
\(\Isome(M,d) \subseteq \Isome(M,g) \).
\end{lemma}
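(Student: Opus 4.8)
The plan is to produce a left-invariant Riemannian metric $g$ on $M$ whose isometry group contains $\Isome(M,d)$. The key idea is to exploit Proposition \ref{Lie_structure}: the group $G := \Isome(M,d)$ is a Lie group acting analytically and transitively on $M$, and the stabilizer $S$ of the identity $1_M$ is compact. So I would start by fixing this Lie group structure on $G$ and the compact subgroup $S = \Stab_1(G)$.

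First I would choose an arbitrary inner product on the tangent space $T_{1_M}M$. Since $S$ is compact and acts on $M$ fixing $1_M$, the differentials $\{ df|_{1_M} : f \in S \}$ form a compact subgroup of $\mathrm{GL}(T_{1_M}M)$, so by averaging the chosen inner product against the Haar measure on $S$ I obtain an inner product $\langle \cdot,\cdot\rangle_1$ on $T_{1_M}M$ that is invariant under all these differentials. Next I would spread this inner product over $M$ by left translations (by elements of $M$, viewed inside $G$ via $\iota$): define $g_p := (dL_{p^{-1}})^* \langle\cdot,\cdot\rangle_1$ for $p \in M$. This is a well-defined left-invariant Riemannian metric tensor because left translations act simply transitively on $M$; smoothness (indeed analyticity) follows from analyticity of the group operations.

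It then remains to check that every $f \in \Isome(M,d)$ is a Riemannian isometry of $(M,g)$, i.e. $f^*g = g$. Writing $f = L_{f(1_M)} \circ h$ where $h := L_{f(1_M)^{-1}} \circ f$ fixes $1_M$ and hence lies in $S$: left translations preserve $g$ by construction, so it suffices to show $h^*g = g$. At the identity this is exactly the $S$-invariance of $\langle\cdot,\cdot\rangle_1$ arranged in the averaging step; at a general point $p$ one uses that $h$ need not commute with left translations, so I would instead argue pointwise, computing $dh|_p$ via the chain rule and reducing to the identity using left-invariance of $g$ together with the fact that $h$ is an isometry of $d$ (so $df|_p$ is a $d$-infinitesimal isometry); more cleanly, since $g$ is left-invariant and $h \in S$ is analytic, one checks $h^*g$ is again a left-invariant metric whose value at $1_M$ equals $\langle\cdot,\cdot\rangle_1$ by the averaging, hence $h^*g = g$ by left-invariance. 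I expect the main obstacle to be precisely this last verification: making rigorous that an element of $S$ — which is a diffeomorphism of $M$ but a priori interacts with the group structure only through being a $d$-isometry — pulls the left-invariant metric back to itself. The resolution is that $h^*g$ is automatically left-invariant once one knows $h$ normalizes the left-translation subgroup $\iota(M) \trianglelefteq G$, or alternatively that $dh$ at every point is conjugate (via left translations) to $dh|_{1_M}$; after that the conclusion is immediate. Finally, by Myers--Steenrod \cite{Myers-Steenrod}, $\Isome(M,g)$ is the group of $g$-tensor-preserving diffeomorphisms, and we have shown it contains $\Isome(M,d)$, completing the proof.
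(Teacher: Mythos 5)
Your construction is exactly the paper's: average an arbitrary inner product on \(T_{1_M}M\) over the compact stabilizer \(S\) furnished by Proposition \ref{Lie_structure}, obtaining an \(S\)-invariant inner product at the identity, and extend it to a left-invariant metric \(g\). The paper stops there, and you rightly observe that the remaining verification --- that each \(h\in S\) satisfies \(h^*g=g\) at \emph{every} point, not just at \(1_M\) --- is the real content of the inclusion \(\Isome(M,d)\subseteq\Isome(M,g)\). However, one of the two justifications you offer for this step rests on a false premise: you propose to use that \(h\) normalizes the left-translation subgroup, i.e.\ \(M^L\normal G\). That is precisely condition (a) of Lemma \ref{TFAE}, equivalent to all isometries being affine; it is the conclusion of the main theorem in the nilpotent case and is \emph{false} for general connected metric Lie groups (e.g.\ the rototranslation group of Section \ref{sec:counterexamples}), so it cannot be invoked here, and consequently the claim that \(h^*g\) is ``automatically left-invariant'' is not available either. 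Your other suggestion --- that \(dh|_p\) is conjugate via left translations to \(dh|_{1_M}\) --- is also not quite right as stated, but it points at the correct argument: for \(p\in M\) the map \(s_p:=L_{h(p)^{-1}}\circ h\circ L_p\) is a \(d\)-isometry fixing \(1_M\), hence lies in \(S\), and \(dh|_p=dL_{h(p)}|_{1_M}\circ ds_p|_{1_M}\circ dL_{p^{-1}}|_p\). Since the averaged inner product is invariant under the differential at \(1_M\) of \emph{every} element of \(S\) (not merely of \(h\) itself --- this is why one averages over all of \(S\)), and \(g\) is left-invariant, this identity yields \((dh|_p)^*g_{h(p)}=g_p\). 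With that substitution your proof closes the gap and coincides with the paper's.
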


\begin{proof}[Proof of Lemma \ref{lemma:riem_metric}] 
Fix a scalar product 
\( \bbraket{\cdot,\cdot} \) 
on the tangent space \( T_1 M \) at the identity 1 of $M$. From Proposition \ref{Lie_structure}, the stabilizer \( S \) of \( 1 \) in \( \Isome(M,d) \) is compact and acts smoothly on \( M \). Let \( \mu_S \) be the probability Haar measure on \( S \). Consider for \( v,w \in T_1 M \) 
\[ \braket{v,w} \coloneqq \int_S \bbraket{\dif F v, \dif F w} \dif \mu_S(F) .\]
Then \( \braket{\cdot,\cdot} \) defines a \( S \)-invariant scalar product on \( T_1 M \), and one can take \( g \) as the left-invariant Riemannian metric that coincides with \( \braket{\cdot,\cdot} \) at the identity.
\end{proof} 
\begin{proof}[Proof of Proposition \ref{prop:isometries_are_riemannian}] 
By Lemma \ref{lemma:riem_metric}
let \( g_2 \)   be a Riemannian metric on \( M_2 \) with     \begin{equation}\label{eq:incl2}
 \Isome(M_2,d_2) \subseteq \Isome(M_2,g_2) .\end{equation} 
  Fix \( F \colon (M_1,d_1) \ra (M_2,d_2) \) an isometry. By Theorem~\ref{smooth:thm} the map \( F \) is smooth, and we may define a Riemannian metric on \( M_1 \) by \( g_1 \coloneqq F^* g_2 \). There are two things to check: a) \( \Isome(M_1,d_1) \subseteq \Isome(M_1,g_1) \), which in particular gives that \( g_1 \) is left-invariant and b) every isometry \( H \colon (M_1,d_1) \ra (M_2,d_2) \) is an isometry of Riemannian manifolds.

For part a, since by construction \( F \) is also a Riemannian isometry, the map \( I \mapsto F \circ I \circ F^{-1} \) is a bijection between \( \Isome(M_1,d_1) \) and \( \Isome(M_2,d_2) \) and between \( \Isome(M_1,g_1) \) and \( \Isome(M_2,g_2) \).
Therefore the inclusion
\eqref{eq:incl2} implies the inclusion
\( \Isome(M_1,d_1) \subseteq \Isome(M_1,g_1) \).

For part b, 
since \( H \circ F^{-1} \in \Isome(M_2,d_2) \subseteq \Isome(M_2,g_2) \), then \( (H \circ F^{-1})^* g_2 = g_2 \). Consequently, we get \( H^* g_2 = F^* (H \circ F^{-1})^* g_2 = g_1 \).
\end{proof} 

\section{Affine decomposition}
\subsection{Preliminary lemmas}

Given a  group $M$ we denote   by $M^L$ the group of left translations on $M$.
The following two results make sense in the settings of groups equipped with  left-invariant distances. We call such groups {\em metric groups}.
\begin{lemma}\label{Lemma316}
Let $M_1$ and $M_2$ be metric groups. 
Suppose $ F : M_1 \to M_2$
is an isometry   and 
$ F\circ M_1^L \circ F^{-1} = M^L_2 $. 
Then F is affine.
\end{lemma}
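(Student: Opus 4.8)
\textbf{Proof plan for Lemma \ref{Lemma316}.}

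The plan is to reduce the statement to the fact that a group homomorphism is precisely a map that intertwines left translations appropriately, and then read off affineness by composing with a single left translation. First I would set $a \coloneqq F(1_{M_1})$ and replace $F$ by $G \coloneqq L_a^{-1} \circ F$, so that $G(1_{M_1}) = 1_{M_2}$. Since $L_a^{-1} = L_{a^{-1}} \in M_2^L$ and left translations form a group, the hypothesis $F \circ M_1^L \circ F^{-1} = M_2^L$ is preserved: indeed $G \circ M_1^L \circ G^{-1} = L_a^{-1} \circ (F \circ M_1^L \circ F^{-1}) \circ L_a = L_a^{-1} \circ M_2^L \circ L_a = M_2^L$, using that $M_2^L$ is normalized by its own elements. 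Thus it suffices to show that an isometry $G$ fixing the identity and satisfying $G \circ M_1^L \circ G^{-1} = M_2^L$ is a group homomorphism; then $F = L_a \circ G$ is affine by definition.

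Next I would extract the homomorphism. For each $m \in M_1$, the map $G \circ L_m \circ G^{-1}$ lies in $M_2^L$, so there is a unique element $\phi(m) \in M_2$ with $G \circ L_m \circ G^{-1} = L_{\phi(m)}$. This defines a map $\phi \colon M_1 \to M_2$. Evaluating both sides at $G(1_{M_1}) = 1_{M_2}$ gives $\phi(m) = (G \circ L_m \circ G^{-1})(1_{M_2}) = (G \circ L_m)(1_{M_1}) = G(m)$, so in fact $\phi = G$. Then for $m, m' \in M_1$, from $L_{mm'} = L_m \circ L_{m'}$ we get $L_{G(mm')} = G \circ L_{mm'} \circ G^{-1} = (G \circ L_m \circ G^{-1}) \circ (G \circ L_{m'} \circ G^{-1}) = L_{G(m)} \circ L_{G(m')} = L_{G(m)G(m')}$, and since $p \mapsto L_p$ is injective on a group, $G(mm') = G(m)G(m')$. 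Hence $G$ is a homomorphism, and it is a homeomorphism since $F$ (and $L_a$) are, so it is an isomorphism of topological groups; in particular $F = L_a \circ G$ is affine.

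I do not expect a serious obstacle here — the argument is essentially formal, relying only on the group structure of $M_i^L$, the injectivity of $p \mapsto L_p$, and the normalizing computation in the first paragraph. The one point requiring a little care is making sure the hypothesis really does descend after precomposing with $L_a^{-1}$, i.e.\ that $M_2^L$ is invariant under conjugation by its own elements (which holds because $L_b \circ L_c \circ L_b^{-1} = L_{bcb^{-1}}$), so that the reduction to $G$ is legitimate. Everything else is bookkeeping; note in particular that the isometry hypothesis on $F$ is not actually used in this lemma beyond ensuring $F$ is a homeomorphism, which is what upgrades the abstract homomorphism $G$ to an isomorphism of Lie groups.
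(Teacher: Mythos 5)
Your proof is correct and follows essentially the same route as the paper's: normalize so the isometry fixes the identity, observe that conjugation by it restricts to a group isomorphism $M_1^L \to M_2^L$, and evaluate at the identity to see that this isomorphism is the map itself. You are somewhat more explicit than the paper (which leaves the preservation of the hypothesis under the translation reduction and the multiplicativity of $m \mapsto L_{G(m)}$ implicit), but the underlying argument is identical.
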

\begin{proof} 
Up to precomposing with a translation, we assume that
 $F(1_{M_1})=1_{M_2}$. So we want to prove that \(F \) is an isomorphism.
The map \( \mathrm{C}_F \colon \Isome(M_1) \to \Isome(M_2) \), \( I \mapsto F \circ I \circ F^{-1} \), is an isomorphism and by assumption it gives an isomorphism between \( M_1^L \) and \( M_2^L \). We claim that \( F \) is the same isomorphism when identifying \( M_i \) with \( M_i^L \). Namely, we want to show that for all \( m \in M_1 \) we have \( L_{F(m)} = \mathrm{C}_F(L_m) \). By assumption for all \( m_1 \in M_1 \) exists \( m_2 \in M_2 \) such that \( L_{m_2} = \mathrm{C}_F(L_{m_1}) \). Evaluating at \( 1_{M_2} \), we get
\[
m_2 = 
L_{m_2}(1_{M_2}) = \mathrm{C}_F(L_{m_1})(1_{M_2}) = 
F( L_{m_1}(F^{-1}(1_{M_2}) )) 
= F(m_1).
\qedhere
\]
\end{proof} 

With the next result we clarify that the condition of self-isometries being affine is equivalent to left translations being a normal subgroup of the group of isometries. Equivalently, we have a semi-direct product decomposition of the isometry group. 
Namely, given a metric group $M$ and denoting by
$G$ the isometry group
and by 
$\mathrm{Stab}_1 (G) $ the stabilizer of the identity element, we have that 
  $M$ has affine isometries if and only if 
 \( G = M^L \rtimes \mathrm{Stab}_1 (G) \).
We denote by $\mathrm{Aff}(M)$ the group of affine maps from  $M$ to $M$ and by $\mathrm{Aut}(M) $ the group of automorphisms of $M$.
\begin{lemma}\label{TFAE}
Let \( M \) be a metric group with isometry group \( G \).
Then the following are equivalent:
\begin{enumerate}
\renewcommand{\theenumi}{\alph{enumi}}
\item \( M^L \normal G \), 
i.e.,  $ F\circ M^L \circ F^{-1} = M^L$, for all $F\in G$;

\item
$G< \mathrm{Aff}(M)$;

\item
 \( \mathrm{Stab}_1 (G) < \mathrm{Aut}(M) \);

\item
 \( G = M^L \rtimes \mathrm{Stab}_1 (G) \).
\end{enumerate}
\end{lemma}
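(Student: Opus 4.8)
The plan is to prove the cycle of implications (a) $\Rightarrow$ (c) $\Rightarrow$ (b) $\Rightarrow$ (a), and then show that (a) together with (c) is equivalent to (d), which will essentially be unwinding the definition of an internal semi-direct product.

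\textbf{Proof of (a) $\Rightarrow$ (c).} Assume $M^L \normal G$. Take $F \in \mathrm{Stab}_1(G)$. By hypothesis, conjugation by $F$ preserves $M^L$, so $F \circ M_1^L \circ F^{-1} = M^L$ with $M_1 = M_2 = M$. Lemma \ref{Lemma316} then applies and gives that $F$ is affine; but since $F(1_M) = 1_M$, the translation part is trivial, so $F$ is an automorphism. Hence $\mathrm{Stab}_1(G) < \mathrm{Aut}(M)$.

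\textbf{Proof of (c) $\Rightarrow$ (b).} Assume $\mathrm{Stab}_1(G) < \mathrm{Aut}(M)$. Let $F \in G$ be arbitrary and set $m := F(1_M)$. Then $L_m^{-1} \circ F = L_{m^{-1}} \circ F$ is an isometry fixing $1_M$, hence an element of $\mathrm{Stab}_1(G)$, hence an automorphism $\phi$ by hypothesis. Therefore $F = L_m \circ \phi$ is the composition of a left translation and a homomorphism, i.e., $F \in \mathrm{Aff}(M)$. This shows $G < \mathrm{Aff}(M)$.

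\textbf{Proof of (b) $\Rightarrow$ (a).} Assume $G < \mathrm{Aff}(M)$. Left translations are always isometries of a left-invariant distance, so $M^L < G$. To see normality, take $F \in G$ and $m \in M$. Writing $F = L_a \circ \phi$ with $a \in M$ and $\phi \in \mathrm{Aut}(M)$, a direct computation gives $F \circ L_m \circ F^{-1} = L_a \circ \phi \circ L_m \circ \phi^{-1} \circ L_{a^{-1}} = L_a \circ L_{\phi(m)} \circ L_{a^{-1}} = L_{a\phi(m)a^{-1}}$, which lies in $M^L$. Hence $M^L \normal G$.

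\textbf{Equivalence with (d).} Finally, assuming (a), (b), (c) all hold, we check $G = M^L \rtimes \mathrm{Stab}_1(G)$. We have $M^L \normal G$ by (a). The intersection $M^L \cap \mathrm{Stab}_1(G)$ is trivial, since a left translation $L_m$ fixing $1_M$ must have $m = L_m(1_M) = 1_M$. And $G = M^L \cdot \mathrm{Stab}_1(G)$ because, as in the proof of (c) $\Rightarrow$ (b), any $F \in G$ factors as $L_{F(1_M)} \circ (L_{F(1_M)}^{-1} \circ F)$ with the second factor in $\mathrm{Stab}_1(G)$. These three facts are exactly the definition of the internal semi-direct product decomposition, giving (d). Conversely, (d) $\Rightarrow$ (a) is immediate, since the normality of $M^L$ is part of the statement of (d). This closes the equivalences. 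I do not expect a serious obstacle here; the only point requiring care is invoking Lemma \ref{Lemma316} in the step (a) $\Rightarrow$ (c) with both groups equal to $M$, and keeping track that an affine self-map fixing the identity is genuinely an automorphism.
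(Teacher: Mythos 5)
Your proposal is correct and follows essentially the same route as the paper: the key implication rests on Lemma \ref{Lemma316}, the implication (b) $\Rightarrow$ (a) is the same explicit conjugation computation $F\circ L_m\circ F^{-1}=L_{a\phi(m)a^{-1}}$, and the equivalence with (d) reduces to the same two facts $M^L\cdot \mathrm{Stab}_1(G)=G$ and $M^L\cap\mathrm{Stab}_1(G)=\{\id\}$. The only difference is cosmetic (you cycle (a) $\Rightarrow$ (c) $\Rightarrow$ (b) $\Rightarrow$ (a) rather than proving (a) $\Leftrightarrow$ (b) directly and noting (b) $\Leftrightarrow$ (c) is trivial), and your care in checking that an affine bijection fixing the identity is an automorphism is exactly the right point to flag.
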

\begin{proof} 
Property (a) implies (b) by  Lemma \ref{Lemma316}.
Regarding the fact that (b) implies (a),  consider a map \( F \in G \), which we know is of the form \( F = \tau \circ \Phi \) with \( \tau \in M^L \) and \( \Phi \in \Aut(M) \). For all \( p \in M \) we get
\begin{eqnarray*}
F \circ L_p \circ F^{-1}
&=& (\tau \circ \Phi) \circ L_p \circ(\tau \circ \Phi)^{-1}
\\
&=& \tau \circ \Phi \circ L_p \circ \Phi^{-1} \circ \tau^{-1}
\\
&=& \tau \circ L_{\Phi(p)} \circ \Phi \circ \Phi^{-1} \circ \tau^{-1}
\\
&=& \tau \circ L_{\Phi(p)} \circ \tau^{-1} \in M^L,
\end{eqnarray*}
which gives \( M^L \normal G \).


The equivalence of (b)  with (c) is trivial.
The equivalence of (a)  with (d) 
follows from the facts
 $M^L \cdot  \mathrm{Stab}_1 (G) = G$ and $M^L \cap  \mathrm{Stab}_1 (G) =\{\id\}$. 
\end{proof} 

\subsection{The nilradical condition}\label{sec:nilradical}


\begin{definition}[Nilradical condition] \label{def:nilrad}
Let \( \mathfrak{g} \) be a Lie algebra. The \emph{nilradical} of 
\( \mathfrak{g} \), denoted by \( \nil(\mathfrak{g}) \), is the largest nilpotent ideal of \( \mathfrak{g} \).
We say that a connected metric Lie group \( N \)
with isometry group $\Isome(N)$
 satisfies \emph{the nilradical condition} if it holds 
\begin{equation}\label{nilradica_condition} 
 \Lie(N^L) = \nil(\Lie(\Isome(N))) .
\end{equation} 
\end{definition}
Clearly, a metric Lie group \( N \) 
 satisfies  the nilradical condition only if it is nilpotent.
 The nilradical of a Lie algebra \( \mathfrak{g} \) can   also be defined as  the sum of all nilpotent ideals of
\( \mathfrak{g} \), see  \cite[Definition 5.2.10]{Hilgert_Neeb:book}.

The nilradical condition is introduced after 
\cite{Wolf, Wilson, Gordon-Wilson}. The importance of such condition is shown by the following  result.

\begin{lemma} \label{lemma317}
Let 
 $N_1$ and $N_2$ 
 be two connected  metric Lie groups. 
Assume that both $N_1$ and $N_2$ satisfy the nilradical condition \eqref{nilradica_condition}.
Then any isometry \( F \colon N_1 \ra N_2 \) is affine.
\end{lemma}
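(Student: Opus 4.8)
The plan is to reduce Lemma~\ref{lemma317} to Lemma~\ref{Lemma316}. By that lemma it suffices to produce an isometry realizing the hypothesis, so the real content is: if $F\colon N_1\to N_2$ is an isometry and both $N_i$ satisfy the nilradical condition, then $F\circ N_1^L\circ F^{-1}=N_2^L$. First I would invoke Theorem~\ref{smooth:thm} (or more precisely its consequence Proposition~\ref{prop:isometries_are_riemannian}) only insofar as it is needed; actually the cleaner route is to work directly with the Lie group structures on $G_i:=\Isome(N_i)$ coming from Proposition~\ref{Lie_structure}, which apply since each $N_i$ is connected. The conjugation map $\mathrm{C}_F\colon G_1\to G_2$, $I\mapsto F\circ I\circ F^{-1}$, is a continuous isomorphism of topological groups, hence a Lie group isomorphism, hence its differential $\dif\mathrm{C}_F\colon \Lie(G_1)\to\Lie(G_2)$ is a Lie algebra isomorphism.

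The key step is that the nilradical is a characteristic ideal: any Lie algebra isomorphism $\phi\colon\mathfrak g_1\to\mathfrak g_2$ carries $\nil(\mathfrak g_1)$ onto $\nil(\mathfrak g_2)$, because $\phi$ sends nilpotent ideals to nilpotent ideals bijectively, hence sends the largest one to the largest one. Applying this with $\phi=\dif\mathrm{C}_F$ and using the nilradical condition for both groups gives
\[
\dif\mathrm{C}_F\bigl(\Lie(N_1^L)\bigr)=\dif\mathrm{C}_F\bigl(\nil(\Lie(G_1))\bigr)=\nil(\Lie(G_2))=\Lie(N_2^L).
\]
Since $N_1^L$ is connected (as $N_1$ is connected) and $\mathrm{C}_F$ is a continuous homomorphism, $\mathrm{C}_F(N_1^L)$ is the connected subgroup of $G_2$ with Lie algebra $\dif\mathrm{C}_F(\Lie(N_1^L))=\Lie(N_2^L)$; but $N_2^L$ is also connected with that Lie algebra, and two connected subgroups with the same Lie algebra coincide. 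Therefore $F\circ N_1^L\circ F^{-1}=\mathrm{C}_F(N_1^L)=N_2^L$.

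Finally, with this identity in hand, Lemma~\ref{Lemma316} applied to $F\colon M_1=N_1\to M_2=N_2$ (both viewed as metric groups, which is legitimate since left-invariant Riemannian-induced distances, or indeed any left-invariant distances inducing the topology, are left-invariant) yields that $F$ is affine, completing the proof.

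I expect the main obstacle to be a bookkeeping subtlety rather than a deep one: namely making sure the Lie group structure on $\Isome(N_i)$ is the one relative to which both "$N_i^L$ is an analytic (closed, connected) subgroup with Lie algebra $\Lie(N_i^L)$" and "$\mathrm{C}_F$ is analytic" hold simultaneously — this is exactly Proposition~\ref{Lie_structure} together with the analyticity argument already run in the proof of Theorem~\ref{smooth:thm}. One should also note explicitly that $N_i^L$ is closed in $G_i$ (it is the image of the proper, in fact closed, orbit map composed with the inclusion, or simply because the stabilizers are compact and $G_i/N_i^L$ is identified with a stabilizer), so that "connected subgroup with given Lie algebra" pins it down uniquely; but since we only need the equality of connected immersed subgroups, even closedness is not strictly necessary. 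The genuinely essential input is just the characteristic nature of the nilradical, which is immediate from its definition as the largest nilpotent ideal (or the sum of all nilpotent ideals, as noted after Definition~\ref{def:nilrad}).
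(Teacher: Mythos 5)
Your proof is correct and follows essentially the same route as the paper's: reduce to Lemma~\ref{Lemma316}, observe that \( \dif \mathrm{C}_F \) is a Lie algebra isomorphism and hence carries nilradical to nilradical, and use the nilradical condition to identify \( \dif \mathrm{C}_F(\Lie(N_1^L)) = \Lie(N_2^L) \). The only (immaterial) difference is the final step from the Lie algebra equality to \( \mathrm{C}_F(N_1^L) = N_2^L \): the paper uses \( \exp \circ \dif\mathrm{C}_F = \mathrm{C}_F \circ \exp \) together with surjectivity of the exponential map on connected nilpotent Lie groups, while you invoke uniqueness of the connected (immersed) subgroup with a given Lie subalgebra; both are valid.
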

\begin{proof} 
As before, denote by \( \mathrm{C}_F \colon \Isome(N_1) \to \Isome(N_2) \) the map \( \mathrm{C}_F(I) = F \circ I \circ F^{-1} \). By Lemma \ref{Lemma316} it is enough to show that \( \mathrm{C}_F(N_1^L) = N_2^L \).
Recall by Proposition~\ref{Lie_structure} that for \( i \in \{ 1,2 \} \) the space  \( G_i \coloneqq \Isome(N_i) \) is a Lie group. As pointed out in the proof of Theorem \ref{smooth:thm}, the map \( \mathrm{C}_F \) is a smooth isomorphism. Let us consider the map at the level of Lie algebras, denoting by \( \mathfrak{g}_i \) and \( \mathfrak{n}_i \) the Lie algebras of \( G_i \) and \( N_i^L \), respectively. Namely, we consider the differential
\(
\dif \mathrm{C}_F \colon \mathfrak{g}_1 \to \mathfrak{g}_2
\),
which is an isomorphism of Lie algebras. Hence \( \dif \mathrm{C}_F(\nil(\mathfrak{g}_1)) = \nil(\mathfrak{g}_2) \).
Since by the assumption \( \nil(\mathfrak{g}_i) = \mathfrak{n}_i \), we get \( \dif \mathrm{C}_F(\mathfrak{n}_1 ) = \mathfrak{n}_2 \). Recall the formula \( \mathrm{exp} \circ \dif \mathrm{C}_F = \mathrm{C}_F \circ \mathrm{exp} \) and the fact that for connected nilpotent Lie groups the exponential map is surjective, see \cite{Warner,Corwin-Greenleaf}. Then we deduce that
\[
\mathrm{C}_F(N_1^L) = \mathrm{C}_F( \exp(\mathfrak{n}_1)) = \exp( \dif \mathrm{C}_F( \mathfrak{n}_1)) = \exp( \mathfrak{n}_2) = N_2^L
.\]
The claim follows by Lemma \ref{Lemma316}.
\end{proof} 

The  nilradical condition is satisfied by Riemannian nilpotent Lie groups, where the distance is induced by a left-invariant metric tensor. Such a result was proved by Wolf
 \cite[p.~278 Theorem 4.2]{Wolf}, see also \cite[p.~341 Theorem 2]{Wilson}.
\begin{thm}[{Wolf}]
\label{thm_wilson}
Every Riemannian 
nilpotent connected Lie group
satisfies
the  nilradical condition \eqref{nilradica_condition}
\end{thm}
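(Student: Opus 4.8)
The statement to prove is Wolf's theorem (Theorem~\ref{thm_wilson}): a connected nilpotent Lie group $N$ equipped with a left-invariant Riemannian metric $g$ satisfies the nilradical condition $\Lie(N^L) = \nil(\Lie(\Isome(N)))$. Write $G = \Isome(N,g)$, which is a Lie group by Proposition~\ref{Lie_structure}, let $\mathfrak{g} = \Lie(G)$, and identify $\mathfrak{n} = \Lie(N^L)$ with a subalgebra of $\mathfrak{g}$ via left translations. Since $G$ acts transitively on $N$ with compact point stabilizer $S = \Stab_1(G)$, we have $\dim G = \dim N + \dim S$ and $N^L$ is a closed subgroup acting simply transitively. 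The plan is to show two inclusions: $\mathfrak{n} \subseteq \nil(\mathfrak{g})$ and $\nil(\mathfrak{g}) \subseteq \mathfrak{n}$.

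\textbf{Step 1: $\mathfrak{n}$ is an ideal of $\mathfrak{g}$.} This is the heart of the argument and the place where nilpotency and the Riemannian structure are both used. First I would establish that the identity component $G^0$ decomposes, via the structure theory of isometry groups of Riemannian manifolds, so that one can compare $N^L$ with the connected nilpotent normal subgroup one wants. Concretely, following Wolf, one shows $N^L$ is a \emph{normal} subgroup of $G^0$. The key geometric input is that $N$, being nilpotent, is a ``flat in the large'' situation controlled by the following: for each $F \in S$, the differential $\dif F$ at $1$ is an orthogonal automorphism-type map of $\mathfrak{n}$ (this uses Lemma~\ref{Lemma316}-style reasoning once normality is known, but to get normality one argues directly). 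The clean route is: the transvection-type subgroup generated by $N^L$ and its $S$-conjugates is still a connected Lie subgroup $\widehat N \trianglelefteq G^0$ which is transitive on $N$; one shows $\widehat N$ is nilpotent (here one invokes that a connected Lie group acting simply transitively — after modding out the stabilizer — on a nilpotent group, and generated by unipotent-like flows, is nilpotent, using that $N$ has polynomial growth / is nilpotent as a metric space in a way detected by the Ricci-type curvature identities Wolf derives); then $\widehat N = N^L$ by dimension count, so $N^L \trianglelefteq G^0$, hence $\mathfrak{n} \trianglelefteq \mathfrak{g}$. Being the Lie algebra of a nilpotent group, $\mathfrak{n}$ is a nilpotent ideal, so $\mathfrak{n} \subseteq \nil(\mathfrak{g})$.

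\textbf{Step 2: $\nil(\mathfrak{g}) \subseteq \mathfrak{n}$.} Let $\mathfrak{m} = \nil(\mathfrak{g})$, the largest nilpotent ideal, and let $M$ be the corresponding connected normal subgroup of $G^0$; note $\mathfrak{n} \subseteq \mathfrak{m}$ by Step 1. I would show $M$ still acts transitively on $N$: since $M \supseteq N^L$ and $N^L$ is already transitive, this is immediate. Now $M$ is a connected nilpotent Lie group acting transitively and isometrically on $N$; the stabilizer $M \cap S$ is compact, and a compact subgroup of a connected nilpotent Lie group is central (in fact trivial after passing to the relevant quotient, since a nontrivial compact subgroup of a simply connected nilpotent group is impossible, and here one reduces to that case). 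Hence $M \cap S$ lies in the center and acts trivially (as it fixes a point and is central, it fixes everything), so $M \cap S = \{1\}$ and $M$ acts simply transitively; comparing with the simply transitive action of $N^L$ forces $\dim M = \dim N = \dim N^L$, whence $\mathfrak{m} = \mathfrak{n}$. Combining Steps 1 and 2 gives $\mathfrak{n} = \nil(\mathfrak{g})$, which is \eqref{nilradica_condition}.

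\textbf{Main obstacle.} The serious work is entirely in Step 1: proving that $N^L$ is normal in $G^0$, equivalently that $\mathfrak{n}$ is an ideal. This requires genuine Riemannian geometry on nilpotent Lie groups — Wolf's analysis of the Levi decomposition of $G$, the behavior of one-parameter subgroups of isometries, and curvature computations showing that any ``extra'' isometries beyond $N^L \rtimes S$ cannot exist in a way that would make $\mathfrak{n}$ non-ideal. Since this is a cited theorem of Wolf, in the paper one would simply invoke \cite[p.~278 Theorem 4.2]{Wolf} rather than reproduce the argument; but the honest mathematical content is this normality statement, and everything else (the two dimension counts, the triviality of compact subgroups of nilpotent groups) is routine Lie theory.
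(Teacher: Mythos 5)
The paper gives no proof of Theorem~\ref{thm_wilson}: it is invoked as an external result, with the reference \cite[p.~278 Theorem~4.2]{Wolf} (see also \cite[p.~341 Theorem~2]{Wilson}). So the only fair comparison is between your outline and Wolf's actual argument, and there your proposal has a genuine gap exactly where you say the ``serious work'' is. Step~1 --- that \( \mathfrak{n}=\Lie(N^L) \) is an ideal of \( \mathfrak{g}=\Lie(\Isome(N,g)) \), equivalently that \( N^L \) is normal in the identity component of the isometry group --- is the entire content of Wolf's theorem, and your sketch of it (``flat in the large'', ``unipotent-like flows'', ``Ricci-type curvature identities'') is not an argument that could be checked or completed; it names the kind of input needed without supplying it. In particular the claim that the normal subgroup \( \widehat N \) generated by the \( S \)-conjugates of \( N^L \) is nilpotent is precisely what must be proved, and nothing in your text does so. Since you end by saying one would simply cite Wolf, this is honest, but it means the proposal is a citation dressed as a proof of its hard half.

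Your Step~2, by contrast, is correct and is essentially the same argument the paper itself runs in Proposition~\ref{star!} for the general (non-Riemannian) case: if \( M \) is the connected subgroup with Lie algebra \( \nil(\mathfrak{g}) \supseteq \mathfrak{n} \), then \( M \) is transitive, its point stabilizer is a compact subgroup of a connected nilpotent group, hence central by Lemma~\ref{korvike-max-cpt}, hence trivial because a central element fixing one point of a transitive action fixes all points; a dimension count then gives \( M=N^L \). Two small points you should still address there: you need \( M \) (or its closure) to be a closed subgroup for the stabilizer to be compact --- the paper is careful to pass to the topological closure \( \clo{H} \) in Proposition~\ref{star!} --- and the parenthetical claim that the compact stabilizer is ``in fact trivial after passing to the relevant quotient'' is unnecessary and not quite what the centrality lemma gives; the fixed-point argument is the one that works.
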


We remark that the work of Wolf, together with the work of Gordon and Wilson, is one of the initial steps in the study of  (Riemannian)
nilmanifolds,
solvmanifolds, and homogeneous  Ricci solitons,
see \cite{Gordon-Wilson, Jablonski_HRS,
Jablonski_SSS}.

\subsection{Proof of affinity and its consequences} \label{sec:affinity}

Let \( (N_1,d_1) \) and \( (N_2, d_2) \) be two nilpotent connected metric Lie groups. 
Let \( F \colon (N_1,d_1) \ra (N_2 ,d_2)\) 
 be an isometry. We shall prove that $F$
 is affine.
By Proposition \ref{prop:isometries_are_riemannian} for \( i \in \{ 1,2 \} \) there exist metric tensors \( g_i \) on \( N_i \) such that
\( F \colon (N_1,g_1) \ra (N_2 ,g_2)\) is an isometry. By Wolf's theorem (Theorem \ref{thm_wilson}) the space \( (N_i,g_i) \) satisfies the nilradical condition \eqref{nilradica_condition}. Hence, we apply Lemma \ref{lemma317} with \( N_i = (N_i,g_i) \) and conclude that \( F \) is affine.

In particular, we have that \( N_1 \) and \( N_2 \) are isomorphic as Lie groups. Because of Lemma \ref{TFAE} we also deduce that isometry group of a nilpotent connected metric Lie group \( N \) is the semi-direct product
\begin{equation}\label{semidirect}
\Isome(N) = N \rtimes \operatorname{AutIsome}(N),
\end{equation}
where \( \operatorname{AutIsome}(N)\) denotes the group of automorphisms of \( N \) that are also isometries of \( N \).
\qed

\subsection{The nilradical of nilpotent Lie groups}

In this section we show that,
likewise in Theorem \ref{thm_wilson}, the 
nilradical of the isometry group of
 a nilpotent connected metric Lie group
 is the group of left translations. 
Namely, 
 the  nilradical condition \eqref{nilradica_condition} holds.
We start with a basic lemma and for the reader convenience we provide its proof.

\begin{lemma} 
\label{korvike-max-cpt}
Let \( N \) be a connected nilpotent Lie group. 
Then every compact subgroup \( K < N \) is central in \( N \).
\end{lemma}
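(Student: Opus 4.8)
The plan is to use the exponential map to transport the statement to the Lie algebra level, where nilpotency lets us run an induction on the nilpotency class (equivalently, on the descending central series). First I would recall that since $N$ is a connected nilpotent Lie group, the exponential map $\exp \colon \mathfrak{n} \to N$ is a diffeomorphism, where $\mathfrak{n} = \Lie(N)$; in particular every element of $N$ has a unique logarithm. Let $K < N$ be a compact subgroup. The goal is to show $K$ commutes with all of $N$, i.e. $K \subseteq Z(N)$.

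\textbf{Key steps.} The first step is to show that $K$ is trivial whenever $N$ is abelian: a compact subgroup of $(\R^n,+)$ is $\{0\}$, since a nonzero element generates an unbounded cyclic subgroup whose closure is noncompact. This is the base case. For the inductive step, suppose the claim holds for nilpotent groups of smaller nilpotency class (or smaller dimension), and let $N$ have nilpotency class $c \geq 2$. Consider the center $Z = Z(N)$, which is a nontrivial connected closed normal subgroup, and the quotient $N/Z$, which is a connected nilpotent Lie group of strictly smaller nilpotency class. The image $\bar K$ of $K$ in $N/Z$ is a compact subgroup, hence by induction central in $N/Z$. Thus for every $k \in K$ and $n \in N$ the commutator $[k,n] = k n k^{-1} n^{-1}$ lies in $Z$. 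This gives a map $N \to Z$, $n \mapsto [k,n]$, for each fixed $k$; since $Z$ is central, a short computation shows this map is a group homomorphism $N \to Z$ (using that $[k, n_1 n_2] = [k,n_1]\,{}^{n_1}[k,n_2] = [k,n_1][k,n_2]$ because $[k,n_2] \in Z$). Hence it factors through $N/[N,N]$, a vector group, and its image is a subgroup of the vector group $Z$ — so the image is a connected subspace, actually it is the image of a Lie group homomorphism between vector groups, hence a linear subspace $V_k \subseteq Z \cong \R^m$.

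\textbf{Finishing.} Now I would use compactness of $K$ again. Fix $n \in N$ and consider the orbit map $K \to N$, $k \mapsto k n k^{-1} = [k,n]\, n$; since $[k,n] \in Z$, conjugation by $K$ moves $n$ only within the coset $Zn$, and identifying $Zn$ with $Z \cong \R^m$ the map $k \mapsto [k,n]$ has image contained in a compact set (continuous image of the compact $K$). But we also need this image to be a subgroup in order to conclude it is trivial: indeed for fixed $n$, the set $\{[k,n] : k \in K\}$ — wait, this requires $k \mapsto [k,n]$ to be a homomorphism in $k$, which holds because $[k_1 k_2, n] = {}^{k_1}[k_2,n]\,[k_1,n] = [k_2,n][k_1,n]$, again since $[k_2,n]\in Z$. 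So $\{[k,n]:k\in K\}$ is a compact subgroup of the vector group $Z$, hence trivial. Therefore $[k,n]=1$ for all $k\in K$, $n\in N$, i.e. $K \subseteq Z(N)$, completing the induction.

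\textbf{Main obstacle.} The step I expect to require the most care is verifying that the relevant commutator maps are genuine homomorphisms so that their images are subgroups of the vector group and hence forced to be trivial by compactness; this relies essentially on the inductive hypothesis placing the commutators inside the center, and one must be careful about which variable ($k$ or $n$) the homomorphism property is being asserted in. An alternative, possibly cleaner, route is to pass to $\Ad$: $\Ad(K) \subseteq GL(\mathfrak n)$ is a compact group of automorphisms, and on the nilpotent algebra $\mathfrak n$ one shows a compact group of Lie algebra automorphisms fixing nothing extra must act trivially — but this essentially reduces to the same vector-group-has-no-compact-subgroups fact applied layer by layer along the central series, so I would present the direct inductive argument above.
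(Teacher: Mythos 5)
There is a genuine gap, and it comes from one recurring oversight: you implicitly treat $N$ (and its subquotients) as \emph{simply connected}. The statement "the exponential map $\exp\colon\mathfrak n\to N$ is a diffeomorphism" is false for a general connected nilpotent Lie group -- it requires simple connectivity (think of $N=\mathbb T^n$, or the Heisenberg group modulo an integer lattice in its center). This infects the argument at several places. First, your base case claims a compact subgroup of a connected abelian Lie group is trivial because such a group is $\mathbb R^n$; in fact it is $\mathbb R^k\times\mathbb T^m$, and compact subgroups abound (they are of course central, but not trivial, and you lean on triviality). Second, and fatally, the closing step asserts that $\{[k,n]:k\in K\}$ is a compact subgroup of "the vector group $Z$", hence trivial. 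But $Z=Z(N)$ need not be a vector group: for $N=H_3(\mathbb R)/\Gamma$ with $\Gamma$ a lattice in the center, $Z(N)\cong\mathbb T^1$, which has plenty of nontrivial compact subgroups, so compactness of $\{[k,n]:k\in K\}$ gives you nothing. (The same objection applies to "$N/[N,N]$, a vector group" and to "a linear subspace $V_k\subseteq Z\cong\mathbb R^m$".) Note that the simply connected case is exactly the uninteresting one -- there the lemma degenerates to "compact subgroups are trivial" -- whereas the whole content of the statement, and of its application in the paper to the closure $\clo H$ of $\exp(\mathfrak h)$ inside the isometry group, lies in the non--simply--connected case.

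The paper's proof confronts this head on: it passes to the universal cover $\tilde N\to N$, observes that the (discrete, normal) kernel is central, and works with $B=\exp^{-1}(\pi^{-1}(K))$ and the real span $A_S$ of $\exp^{-1}(\pi^{-1}(e))$ inside $\Lie(\tilde N)$, showing $B\subseteq A_S\subseteq Z(\Lie(\tilde N))$ by a compactness argument in $B/A_S$. Your argument can be repaired along the lines you sketch at the end and then dismiss: $\Ad(K)$ is a compact subgroup of $\Ad(N)\cong N/Z(N)$, which \emph{is} a simply connected (unipotent) nilpotent group, so $\Ad(K)$ is trivial and $K\subseteq\ker\Ad=Z(N)$ since $N$ is connected. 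That route is not "essentially the same" as your layer-by-layer induction -- it is precisely the device that sidesteps the torus factors on which your induction founders -- but it still requires you to prove that compact subgroups of simply connected nilpotent groups are trivial, which is where an honest use of "$\exp$ is a diffeomorphism" (or the paper's spanning argument) belongs.
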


\begin{proof} 
Let the Lie group \( \tilde N \) be the universal covering space of the Lie group \( N \)
in such a way that the projection map \( \pi \colon \tilde N \ra N \) is 
a morphism of Lie groups. 
Consider the sets \( B \coloneqq \exp^{-1} (\pi^{-1}(K)) \), \( A \coloneqq \exp^{-1} (\pi^{-1}(e)) \), and
 \( A_S \coloneqq \mathrm{span}_{\mathbb{R}}(A) \).
Since \( \tilde N \) is connected and \( \pi^{-1}(e) \) is discrete and normal in \( \tilde N \), the subgroup \( \pi^{-1}(e) \) is central in \( \tilde N \). Since \( \tilde N \) is nilpotent, we have (see \cite[p.~24]{Corwin-Greenleaf})
\begin{equation}  \label{eq:GG}
\exp(Z(\Lie(\tilde N))) = Z(\tilde N).
\end{equation}
Thus, since the exponential map is invertible, we deduce that \( A \) is central in \( \Lie(\tilde N) \). Hence \( \exp(b+A) = \exp(b) \exp(A) \) for all \( b \in B \). So the map \( \mathrm{exp}|_B \) induces a homeomorphism between \( B/A \) and \( \exp(B)/\exp(A) = \pi^{-1}(K)/\pi^{-1}(e) \cong K \). Thus the space \( B/A \) is compact. Since the canonical map from \( B/A \) to \( B/A_S \) is a continuous surjection, also \( B/A_S \) is compact.

We claim that \( B \subseteq A_S \). Indeed, by contradiction assume that there is \( b \in B \backslash A_S \). Notice that for all \( n \in \mathbb{N} \) we have \( nb \in B \) since \( \exp(nb) =\exp(b)^n \in \pi^{-1}(K) \). Since we proved that \( B / A_S \) is compact, the sequence \( nb + A_S \in B/A_S\) accumulates to a point \( x \in A/B_S \).
Consider a Euclidean distance \( d_E \) on \( \Lie(\tilde N) \). On the one hand, since \( b \) is not in the subspace \( A_S \), we have \( d_E(nb, A_S) \ra \infty \) as \( n \ra \infty \). On the other hand,
\[
d_E(nb,A_S) \le d_E(nb,x + A_S) + d_E(x+A_S,A_S)
\]
accumulates to some finite value. We deduce that
\[ K = \pi(\exp(B)) \subseteq \pi(\exp(A_S)) \subseteq \pi(\exp(Z(\Lie(\tilde N)))) = \pi(Z(\tilde N)) \subseteq Z(N)
\]
where we used   the fact that \( A_S \) is central and the equation \eqref{eq:GG}.
\end{proof} 

\begin{prop}
\label{star!} 
For a nilpotent connected metric Lie group \( N \) the  nilradical condition \eqref{nilradica_condition} holds.
\end{prop}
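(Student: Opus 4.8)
The plan is to exploit the relationship with the Riemannian case exactly as in Section~\ref{sec:affinity}, but now upgraded to an \emph{equality} of nilradicals rather than just the existence of one isomorphism. First I would pick, using Proposition~\ref{prop:isometries_are_riemannian} (applied with $M_1=M_2=N$), a left-invariant Riemannian metric $g$ on $N$ with $\Isome(N,d)\subseteq\Isome(N,g)$. Write $G=\Isome(N,d)$ and $G'=\Isome(N,g)$, both Lie groups by Proposition~\ref{Lie_structure}, with $G$ a closed subgroup of $G'$ (closedness is immediate since being a $d$-isometry is a closed condition, and the inclusion is continuous). By Wolf's theorem (Theorem~\ref{thm_wilson}), $G'$ satisfies the nilradical condition, i.e.\ $\Lie(N^L)=\nil(\Lie(G'))$. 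The goal is to transfer this to $G$: I want $\Lie(N^L)=\nil(\Lie(G))$.

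The containment $\Lie(N^L)\subseteq\nil(\Lie(G))$ is the easy direction: $N^L$ is a connected nilpotent normal Lie subgroup of $G$ (it is normal because left translations commute appropriately with any isometry fixing $1$ up to conjugation — more precisely, for nilpotent $N$ the group $N^L$ is always normal in $\Isome(N,d)$; this is essentially what Lemma~\ref{lemma317}/Lemma~\ref{TFAE} give once affinity is known, but here I should argue it directly, or simply note $N^L$ normal in the bigger group $G'$ forces $N^L$ normal in $G$ since $G\subseteq G'$ and $N^L\subseteq G$). Hence $\Lie(N^L)$ is a nilpotent ideal of $\Lie(G)$, so it sits inside $\nil(\Lie(G))$. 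For the reverse containment, let $\mathfrak{m}=\nil(\Lie(G))$. Since $\Lie(G)\subseteq\Lie(G')$ as Lie subalgebra (the inclusion $G\hookrightarrow G'$ is a smooth injective homomorphism), and since $\mathfrak{m}$ is a nilpotent ideal of $\Lie(G)$, I would try to show $\mathfrak{m}$ is still an ideal — or at least generates a nilpotent normal subgroup — of $\Lie(G')$, which by Wolf would force $\mathfrak{m}\subseteq\nil(\Lie(G'))=\Lie(N^L)$, finishing the proof.

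The main obstacle is precisely that last step: a nilpotent ideal of a subalgebra need not be an ideal of the ambient algebra, so one cannot naively conclude $\nil(\Lie(G))\subseteq\nil(\Lie(G'))$. The right tool here is the structure of the stabilizer. Let $S=\mathrm{Stab}_1(G)$ and $S'=\mathrm{Stab}_1(G')$; both are compact (Proposition~\ref{Lie_structure}(1)), and $G=N^L\cdot S$, $G'=N^L\cdot S'$ with $S\subseteq S'$. The nilradical $\mathfrak{m}=\nil(\Lie(G))$ is the maximal nilpotent ideal; I claim $\mathfrak{m}\cap\Lie(S)$ is trivial, because $\Lie(S)$ is the Lie algebra of a compact group and any nilpotent ideal of $\Lie(G)$ meeting it would produce, via Lemma~\ref{korvike-max-cpt} applied inside the connected nilpotent normal subgroup $\exp(\mathfrak{m})$, a compact subgroup of a nilpotent group which is central, hence acting trivially — but elements of $S\setminus\{\id\}$ move points. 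More carefully: $\exp(\mathfrak{m})$ is a connected nilpotent normal Lie subgroup of $G$, its intersection with the compact $S$ is a compact subgroup $K$, which by Lemma~\ref{korvike-max-cpt} is central in $\exp(\mathfrak{m})$; but a central element of a normal subgroup that also fixes $1_N$ and is an isometry is forced to be the identity (since it commutes with all of $N^L\subseteq\exp(\mathfrak m)$... here one needs $N^L\subseteq\exp(\mathfrak m)$, which is exactly the easy inclusion already established, so $K$ commutes with $N^L$, hence $K\subseteq S$ fixing $1$ and commuting with all translations is trivial). Therefore $\mathfrak{m}$ projects injectively into $\Lie(G)/\Lie(S)\cong\Lie(N^L)$ (as vector spaces via the orbit map at $1$), giving $\dim\mathfrak{m}\le\dim N^L=\dim\Lie(N^L)$; combined with the easy inclusion $\Lie(N^L)\subseteq\mathfrak{m}$ this forces equality $\mathfrak{m}=\Lie(N^L)$, which is the nilradical condition. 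I would write this dimension-count argument carefully, as it is the crux; the rest is bookkeeping with the already-established facts.
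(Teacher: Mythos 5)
Your proposal follows essentially the same route as the paper's proof: the easy inclusion $\Lie(N^L)\subseteq\nil(\Lie(G))$ via normality of $N^L$, and the reverse inclusion by intersecting the stabilizer with the connected nilpotent subgroup tangent to the nilradical, invoking Lemma \ref{korvike-max-cpt} to get centrality, and then using commutation with $N^L$ to show that intersection is trivial. The crux of your argument is the right one.

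There is, however, one genuine technical gap. You apply Lemma \ref{korvike-max-cpt} to $K=\exp(\mathfrak m)\cap S$ and call it a compact subgroup, but $\exp(\mathfrak m)$ need not be a \emph{closed} subgroup of $G$, so $K$ is a priori only a (possibly non-closed) subgroup of the compact group $S$, and Lemma \ref{korvike-max-cpt} genuinely requires compactness of the subgroup. The paper repairs exactly this point by passing to the topological closure $\clo H$ of $H=\exp(\nil(\Lie(G)))$ inside $G$: the closure is still connected and nilpotent, $\Stab_1(G)\cap\clo H$ is closed in $S$ and hence compact, and centrality in $Z(\clo H)$ still suffices because $N^L\subseteq\clo H$, so the commutation-with-translations step goes through unchanged. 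With that fix in place, your concluding dimension count (injectivity of $\mathfrak m\to\Lie(G)/\Lie(S)$ combined with the easy inclusion) is a valid, if slightly less direct, alternative to the paper's pointwise argument that every $\phi\in H\cap\Stab_1(G)$ is the identity, which yields $H\subseteq N^L$ outright.
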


\begin{proof}
Let $  \mathfrak{  n} \coloneqq \Lie(N^L)$,
$G \coloneqq \Isome(N)
$,
   $  \mathfrak{\h}  \coloneqq  \nil(\Lie(G))$, and \( H := {\exp(  \mathfrak{h} )} \).
   Notice that $H$ is  a Lie subgroup  since on nilpotent Lie groups the exponential map is surjective.

From Theorem \ref{main:thm} and Lemma \ref{TFAE} we have \( N^L \normal G \). Therefore, 
\( \mathfrak{n} \) is a nilpotent ideal of \( \mathfrak{g} \). 
Thus \( \mathfrak{n} \subseteq \mathfrak{h} \).
Alternatively, 
one can use
 Proposition \ref{prop:isometries_are_riemannian} to get a Riemannian metric  \( g \) with \( G < \Isome(N,g) \). Then Theorem \ref{thm_wilson}
 implies that \( \mathfrak{n} \) is an   ideal of     \( \Lie(\Isome(N,g)) \) and hence it is an   ideal   of \( \Lie(G) \).

Regarding the other inclusion, take \( \phi \in H \). We need to show that \( \phi \) is a left translation. Since  \( \mathfrak{n} \subseteq \mathfrak{h} \) and so \( N^L \subseteq H \), we may assume \( \phi(1)=1 \), i.e., \( \phi \in \Stab_1(G) \). Since \( \Stab_1(G) \cap \clo{H} \), which is a compact subgroup of the connected nilpotent group \( \clo{H} \) (the topological closure of \( H \) in \( G \)), is central in \( \clo{H} \) (see Lemma \ref{korvike-max-cpt}), we get that
   \( \phi \in \Stab_1(G) \cap H \subseteq Z(\clo{H}) \). Since \( N^L \subseteq \clo{H} \), we get for all \( q \in N \)
  \[ \phi(q) = (\phi \circ L_q)(1) = (L_q \circ \phi)(1) = L_q(1) = q.  \]
  Thus \( \phi = \mathrm{id} \) and so \( H \subseteq N \) and \( \mathfrak{h} \subseteq \mathfrak{n} \).
\end{proof} 


\section{Examples for the sharpness of the assumptions}
\label{sec:counterexamples}
In this section we provide several examples to illustrate the sharpness of the assumptions in Theorem \ref{main:thm}.
Namely, we show that 
if one of the groups is not assumed
connected and nilpotent then there may be isometries that are not affine.

Regarding the connectedness assumption,
there are examples of Abelian metric Lie groups with finitely many components for which some isometries are not affine. One of the simplest examples is the subgroup of \( \mathbb{C} \) consisting of the four points \( \{1,\mathrm{i},-1,-\mathrm{i}\} \) equipped with the discrete distance. Here every permutation is an isometry. However, any automorphism needs to fix \( -1 \), since it is the only point of order 2.

Regarding the nilpotent assumption, there are both compact and non-compact examples. We remark that in any group equipped with a bi-invariant distance the involution is an isometry. Consequently, every compact group admits a distance for which the involution is an isometry. Such a map is a group isomorphism only if the group is Abelian. Nonetheless, we point out  the following fact which is a consequence of the work of Baum--Browder and Ochiai--Takahashi, see \cite{Baum_Browder,  Ochiai_Takahashi} and also \cite{Scheerer, Hubbuck_Kane}.

\begin{corollary} \label{coroll:scheerer}
Let \( G_1, G_2 \) be connected compact simple metric Lie groups. If \( F \colon G_1 \to G_2 \) is an isometry,
then \( G_1\) and \( G_2 \) are isomorphic as Lie groups.
If, moreover, 
\( G_1, G_2 \) are the same metric Lie group and
 $F$ is homotopic to the identity map via isometries,
 then \( F \) is affine.
\end{corollary}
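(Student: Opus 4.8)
The plan is to deduce the corollary from the general theory already assembled, reducing everything to a statement about compact Lie groups. Given an isometry $F\colon G_1\to G_2$, Theorem~\ref{smooth:thm} makes $F$ analytic, and by Proposition~\ref{prop:isometries_are_riemannian} there are left-invariant Riemannian metrics $g_i$ for which $F\colon(G_1,g_1)\to(G_2,g_2)$ is a Riemannian isometry. First I would pass to the identity components and then (composing with a left translation) assume $F(1)=1$; since $G_1,G_2$ are assumed connected, no component issues arise. Now the point is that a Riemannian isometry fixing the identity between Lie groups, while not a priori a homomorphism, must at least induce a diffeomorphism of the underlying manifolds; hence $G_1$ and $G_2$ are diffeomorphic compact simple Lie groups. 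The classification-type input of Baum--Browder and Ochiai--Takahashi (as recorded in \cite{Baum_Browder,Ochiai_Takahashi,Scheerer,Hubbuck_Kane}) says that two compact connected simple Lie groups that are homotopy equivalent --- in particular diffeomorphic --- are isomorphic as Lie groups. That gives the first assertion.

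For the second assertion, assume $G_1=G_2=G$ with a fixed left-invariant distance, and that $F$ is homotopic to $\id_G$ through isometries. The strategy is to invoke the semidirect-product structure of the isometry group. Since $G$ is compact, a bi-invariant metric exists and more to the point $G=(M,d)$ is boundedly compact, so by Proposition~\ref{Lie_structure} the isometry group $\Isome(G)$ is a compact Lie group with finitely many components, acting analytically, and $G^L\normal$? --- that is exactly what needs checking, but here we do not need full normality. Instead: the connected component $\Isome(G)^\circ$ contains $G^L$ (left translations are connected to the identity through $L_{\gamma(t)}$ along a path $\gamma$ from $1$), and the hypothesis places $F$ in $\Isome(G)^\circ$. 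The key algebraic fact is that the identity component of the isometry group of a compact simple Lie group with a left-invariant metric is $G^L\rtimes(\text{something compact})$, and more precisely that $G^L$ is precisely the set of elements of $\Isome(G)^\circ$ whose action commutes with right translations, or alternatively that $G^L$ is a normal subgroup of $\Isome(G)^\circ$. Granting this, $F=L_{F(1)}\circ\Phi$ with $\Phi\in\Stab_1(\Isome(G)^\circ)$, and one argues as in Lemma~\ref{Lemma316}/Lemma~\ref{TFAE} that $\Phi$ is an automorphism, so $F$ is affine.

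The cleanest way to get that last ingredient is probably to mimic the nilpotent argument: show $\Isome(G)$ satisfies an algebraic characterization of $G^L$ inside it. For compact simple $G$, the Lie algebra $\Lie(\Isome(G))$ is reductive, and one identifies $\Lie(G^L)$ as a distinguished ideal --- e.g.\ $\Isome(G)^\circ$ acts on $G$, the stabilizer $\Stab_1$ is a closed subgroup with $\Lie(\Isome(G)^\circ)=\Lie(G^L)\oplus\Lie(\Stab_1)$ as $\Stab_1$-modules, and because $\Lie(G)$ is simple this splitting forces $\Lie(G^L)$ to be $\Ad(\Stab_1)$-invariant; combined with $\Lie(G^L)$ being a subalgebra it is an ideal, hence $G^L\normal\Isome(G)^\circ$. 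Then Lemma~\ref{Lemma316} finishes the job. Alternatively one may cite directly the structure results of Ochiai--Takahashi on isometry groups of left-invariant metrics on compact simple groups.

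I expect the main obstacle to be the second assertion: transferring from ``$F$ homotopic to $\id$ through isometries'' to ``$F\in G^L\rtimes\Aut(G)$'' requires knowing that the identity component of $\Isome(G)$ has no ``extra'' connected part beyond $G^L$ and automorphisms --- equivalently that $G^L$ is normal in $\Isome(G)^\circ$ for compact simple $G$. This is where the cited work of Baum--Browder, Ochiai--Takahashi (and Scheerer, Hubbuck--Kane) does the real work; the rest is bookkeeping with Theorem~\ref{smooth:thm}, Proposition~\ref{prop:isometries_are_riemannian}, Lemma~\ref{Lemma316}, and Lemma~\ref{TFAE}. The first assertion, by contrast, is essentially immediate once one observes that an isometry is in particular a diffeomorphism and quotes the homotopy-rigidity of compact simple Lie groups.
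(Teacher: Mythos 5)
Your proposal is correct and follows essentially the same route as the paper, which in fact gives no written proof of this corollary and simply attributes it to exactly the references you invoke: homotopy rigidity of compact connected simple Lie groups (Baum--Browder, Scheerer, Hubbuck--Kane) for the first assertion, and Ochiai--Takahashi's structure theorem for the identity component of the isometry group of a left-invariant metric on a compact simple group for the second. One caveat: your in-house alternative for proving \(G^L \normal \Isome(G)^\circ\) --- that simplicity of \(\Lie(G)\) forces the vector-space splitting \(\Lie(\Isome(G)^\circ)=\Lie(G^L)\oplus\Lie(\Stab_1)\) to be \(\Ad(\Stab_1)\)-invariant --- is not justified (the isotropy representation preserves the quotient by \(\Lie(\Stab_1)\), not your chosen complement \(\Lie(G^L)\); compare the paper's rototranslation example, where \(G^L\) fails to be normal), so you should rely on the direct citation of Ochiai--Takahashi, which says that every isometry in the identity component has the form \(x\mapsto axb = L_{ab}\circ(x\mapsto b^{-1}xb)\) and hence is manifestly affine.
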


%

We point out that there exist examples of pairs of metric Lie groups that are isomorphic as Lie groups and are isometric, but are not isomorphic as metric Lie groups: an example is the rototranslation group (see below) with different Euclidean distances.

Other interesting results for isometries between compact groups can be found in \cite{Ozeki} and \cite{Gordon80}.

The conclusion of Corollary \ref{coroll:scheerer} may not hold for arbitrary connected metric Lie groups. In fact, we recall the following example, due to Milnor \cite[Corollary~4.8]{Milnor}, of a group that is solvable and isometric to the Euclidean 3-space. Let \( G \) be the universal cover of the group of orientation-preserving isometries of the Euclidean plane, which is also called the rototranslation group. Such a group admits coordinates making it diffeomorphic to \( \mathbb{R}^3 \) with the product
\[
\left [ \begin{matrix} x \\ y \\ z \end{matrix}  \right]
\cdot
\left [ \begin{matrix} x' \\ y' \\ z' \end{matrix}  \right]
=
\left [ \begin{matrix} \cos z & - \sin z & 0 \\ \sin z & \cos z & 0 \\ 0 & 0 & 1 \end{matrix}  \right]
\left [ \begin{matrix} x' \\ y' \\ z' \end{matrix}  \right]
+
\left [  \begin{matrix} x \\ y \\ z \end{matrix}  \right]
.
\]
In these coordinates, the Euclidean metric is left-invariant. On the one hand, one can check that the isometries that are also automorphisms of \( G \) form a 1-dimensional space. On the other hand, the isometries fixing the identity element and homotopic to the identity map form a group isomorphic to \( \mathrm{SO}(3) \). Hence, we conclude that not all such isometries are affine. 
Moreover, this group gives an example of a non-nilpotent metric Lie group isometric (but not isomorphic) to a nilpotent connected metric Lie group, namely the Euclidean 3-space.

Notice that also the Riemannian metric with orthonormal frame $\partial_x, \partial_y, 2\partial_z$ gives a left-invariant structure on $G$, which is isometric to the previous one, but there is no isometric automorphism between the two structures.
Hence, these spaces are not isomorphic as metric Lie groups.

A further study of metric Lie groups isometric to nilpotent metric Lie groups can be found in \cite{Cowling_LeDonne_Ottazzi}. In particular, all such groups are solvable.

We finally recall another example.
The unit disc in the plane admits a group structure that makes the
hyperbolic distance left-invariant. In this metric Lie group not all isometries are affine.


\bibliography{general_bibliography}

\def\cprime{$'$} \def\cprime{$'$} \def\cprime{$'$} \def\cprime{$'$}
  \def\cprime{$'$}
\providecommand{\bysame}{\leavevmode\hbox to3em{\hrulefill}\thinspace}
\providecommand{\MR}{\relax\ifhmode\unskip\space\fi MR }
\providecommand{\MRhref}[2]{%
  \href{http://www.ams.org/mathscinet-getitem?mr=#1}{#2}
}
\providecommand{\href}[2]{#2}
\begin{thebibliography}{KDP15}

\bibitem[Are46]{Arens}
Richard Arens, \emph{Topologies for homeomorphism groups}, Amer. J. Math.
  \textbf{68} (1946), 593--610.

\bibitem[BB65]{Baum_Browder}
Paul~F. Baum and William Browder, \emph{The cohomology of quotients of
  classical groups}, Topology \textbf{3} (1965), 305--336.

\bibitem[CG90]{Corwin-Greenleaf}
Lawrence~J. Corwin and Frederick~P. Greenleaf, \emph{Representations of
  nilpotent {L}ie groups and their applications. {P}art {I}}, Cambridge Studies
  in Advanced Mathematics, vol.~18, Cambridge University Press, Cambridge,
  1990, Basic theory and examples.

\bibitem[CL14]{Capogna_LeDonne}
Luca Capogna and Enrico {L}{e Donne}, \emph{Smoothness of sub{R}iemannian
  isometries}, Accepted in the American Journal of Mathematics (2014).

\bibitem[CLO]{Cowling_LeDonne_Ottazzi}
Michael Cowling, Enrico {L}{e Donne}, and Alessandro Ottazzi, \emph{Groups
  isometric to nilpotent {L}ie groups}, In preparation.

\bibitem[Gor80]{Gordon80}
Carolyn Gordon, \emph{Riemannian isometry groups containing transitive
  reductive subgroups}, Math. Ann. \textbf{248} (1980), no.~2, 185--192.

\bibitem[GW88]{Gordon-Wilson}
Carolyn~S. Gordon and Edward~N. Wilson, \emph{Isometry groups of {R}iemannian
  solvmanifolds}, Trans. Amer. Math. Soc. \textbf{307} (1988), no.~1, 245--269.

\bibitem[Ham90]{hamenstadt}
Ursula Hamenst{\"a}dt, \emph{Some regularity theorems for
  {C}arnot-{C}arath\'eodory metrics}, J. Differential Geom. \textbf{32} (1990),
  no.~3, 819--850.

\bibitem[Hel01]{Helgason}
Sigurdur Helgason, \emph{Differential geometry, {L}ie groups, and symmetric
  spaces}, Graduate Studies in Mathematics, vol.~34, American Mathematical
  Society, Providence, RI, 2001, Corrected reprint of the 1978 original.

\bibitem[HK85]{Hubbuck_Kane}
J.~R. Hubbuck and R.~M. Kane, \emph{The homotopy types of compact {L}ie
  groups}, Israel J. Math. \textbf{51} (1985), no.~1-2, 20--26.

\bibitem[HN12]{Hilgert_Neeb:book}
Joachim Hilgert and Karl-Hermann Neeb, \emph{Structure and geometry of {L}ie
  groups}, Springer Monographs in Mathematics, Springer, New York, 2012.
  \MR{3025417}

\bibitem[Jab15a]{Jablonski_HRS}
Michael Jablonski, \emph{Homogeneous {R}icci solitons}, J. Reine Angew. Math.
  \textbf{699} (2015), 159--182.

\bibitem[Jab15b]{Jablonski_SSS}
\bysame, \emph{Strongly solvable spaces}, Duke Math. J. \textbf{164} (2015),
  no.~2, 361--402.

\bibitem[KDP15]{Kyed_Densing:Petersen}
David Kyed and Henrik Densing~Petersen, \emph{Quasi-isometries of nilpotent
  groups}, Preprint, arXiv:1503.04068 (2015).

\bibitem[Kis03]{kishimoto}
Iwao Kishimoto, \emph{Geodesics and isometries of {C}arnot groups}, J. Math.
  Kyoto Univ. \textbf{43} (2003), no.~3, 509--522.

\bibitem[LO14]{LeDonne-Ottazzi}
Enrico {L}{e Donne} and Alessandro Ottazzi, \emph{Isometries of {C}arnot groups
  and sub{F}insler homogeneous manifolds}, Journal of Geometric Analysis
  (2014).

\bibitem[Mil76]{Milnor}
John Milnor, \emph{Curvatures of left invariant metrics on {L}ie groups},
  Advances in Math. \textbf{21} (1976), no.~3, 293--329.

\bibitem[MS39]{Myers-Steenrod}
S.~B. Myers and N.~E. Steenrod, \emph{The group of isometries of a {R}iemannian
  manifold}, Ann. of Math. (2) \textbf{40} (1939), no.~2, 400--416.

\bibitem[MZ74]{mz}
Deane Montgomery and Leo Zippin, \emph{Topological transformation groups},
  Robert E. Krieger Publishing Co., Huntington, N.Y., 1974, Reprint of the 1955
  original.

\bibitem[OT76]{Ochiai_Takahashi}
Takushiro Ochiai and Tsunero Takahashi, \emph{The group of isometries of a left
  invariant {R}iemannian metric on a {L}ie group}, Math. Ann. \textbf{223}
  (1976), no.~1, 91--96.

\bibitem[Oze77]{Ozeki}
Hideki Ozeki, \emph{On a transitive transformation group of a compact group
  manifold}, Osaka J. Math. \textbf{14} (1977), no.~3, 519--531.

\bibitem[Pan89]{pansu}
Pierre Pansu, \emph{M\'etriques de {C}arnot-{C}arath\'eodory et
  quasiisom\'etries des espaces sym\'etriques de rang un}, Ann. of Math. (2)
  \textbf{129} (1989), no.~1, 1--60.

\bibitem[Sch68]{Scheerer}
Hans Scheerer, \emph{Homotopie\"aquivalente kompakte {L}iesche {G}ruppen},
  Topology \textbf{7} (1968), 227--232.

\bibitem[Sha04]{Shalom}
Yehuda Shalom, \emph{Harmonic analysis, cohomology, and the large-scale
  geometry of amenable groups}, Acta Math. \textbf{192} (2004), no.~2,
  119--185.

\bibitem[War83]{Warner}
Frank~W. Warner, \emph{Foundations of differentiable manifolds and {L}ie
  groups}, Graduate Texts in Mathematics, vol.~94, Springer-Verlag, New York,
  1983, Corrected reprint of the 1971 edition.

\bibitem[Wil82]{Wilson}
Edward~N. Wilson, \emph{Isometry groups on homogeneous nilmanifolds}, Geom.
  Dedicata \textbf{12} (1982), no.~3, 337--346.

\bibitem[Wol63]{Wolf}
Joseph~A. Wolf, \emph{On locally symmetric spaces of non-negative curvature and
  certain other locally homogeneous spaces}, Comment. Math. Helv. \textbf{37}
  (1962/1963), 266--295.

\end{thebibliography}
\bibliographystyle{amsalpha}
\end{document}